\newtheorem{theorem}{Theorem}[section]
\newtheorem{cor}{Corollary}[section]
\newtheorem{df}{Definition}[section]
\newcommand{\conv}{\mathop{\rm conv}\nolimits}
\newcommand{\vrt}{\mathop{\rm Vert}\nolimits}
\newcommand{\st}{\mathop{\rm St}\nolimits}
\newcommand{\ep}{\mathop{\rm EP}\nolimits}
\title {KKM type theorems with boundary conditions}
\author {Oleg R. Musin\thanks{This research is partially supported by NSF grant DMS - 1400876.}}
\begin{document}

	\ifpdf \DeclareGraphicsExtensions{.pdf, .jpg, .tif, .mps} \else
	\DeclareGraphicsExtensions{.eps, .jpg, .mps} \fi	
	
\date{}
\maketitle

\begin{abstract}  We consider generalizations of  Gale's colored KKM lemma and Shapley's KKMS theorem. It is shown that spaces and covers can be much more general and the boundary KKM rules can be substituted by more weaker boundary assumptions.   
\end{abstract}

\medskip

\noindent {\bf Keywords:} Sperner lemma, KKM theorem, KKMS theorem, Gale lemma, rental harmony, degree of mapping, homotopy classes of mappings, partition of unity.

\section{Introduction}

Sperner's lemma \cite{Sperner} is a combinatorial analog of the Brouwer fixed point theorem, which is equivalent to it. This lemma and its extension for covers, the KKM (Knaster --Kuratowski -- Mazurkiewicz) theorem \cite{KKM}, have many applications in combinatorics, algorithms, game theory and mathematical economics. 

There are many extensions of the KKM theorem. In this paper we consider two of them: Gale's  lemma \cite{Gale} and  Shapley's KKMS theorem \cite{Sh}. 

 David Gale  in \cite{Gale} proved an existence theorem for an exchange equilibrium in an economy with indivisible goods and only one perfectly divisible good, which can be thought of as money. The main lemma for this theorem is  \cite[Lemma, p. 63]{Gale}. 
 
  Gale's lemma can be considered as a {\it colored  KKM} theorem.  In \cite[p. 63]{Gale}, 
 Gale wrote about his lemma: ``A colloquial statement of this result is the {\it red, white and blue lemma} which asserts that {\it if each of three people paint a triangle red, white and blue according to the KKM rules, then there will be a point which is in the red set of one person, the white set of another, the blue of the third.}''  Note that Bapat \cite{Bapat} found an analog of Gale's lemma for Sperner's lemma. 
 
 In fact, Gale's lemma (or its discrete analogs) can be applied for  fair division problems, namely for  the envy--free cake--cutting and rental harmony problems . In the envy--free {\it cake--cutting problem,} a ``cake'' (a heterogeneous divisible resource) has to be divided among $n$ partners with different preferences over parts of the cake \cite{DS,Stn,Strom,Su}. The cake has to be divided into $n$ pieces such that: (a) each partner receives a single connected piece, and (b) each partner believes that his/her piece is (weakly) better than all other pieces. An algorithm for solving this problem was developed by Forest Simmons in 1980, in a correspondence with Michael Starbird. It was first publicized by Francis Su in 1999 \cite{Su}. 

Suppose a group of friends consider renting a house but they shall first agree on how to allocate its rooms and share the rent. They will rent the house only if they can find a room assignment-rent division which appeals to each of them.
Following Su \cite{Su}, we call such a situation {\it rental harmony.} In \cite{AZ} consideration is given to different aspects of this model.

In 1967 Scarf \cite{Scarf} proved that any non-transferable utility game whose characteristic function is balanced, has a non--empty core. His proof is based on an algorithm which approximates fixed points. Lloyd Shapley \cite{Sh}  replaced the Scarf algorithm by a covering theorem (the KKMS theorem) being a generalization of the KKM theorem. Now Shapley's KKMS theorem \cite{Her,Kom,Sh,SV} is an important tool in the general equilibrium theory of economic analysis.

The main goal of this paper to considere generalizations of Gale's and Shapley's KKMS theorems with general boundary conditions. In our paper \cite{MusH}  with any cover of a space $T$ we associate certain homotopy classes of maps from $T$ to $n$--spheres. 
These homotopy invariants can then be considered as obstructions for extending covers of a subspace $A \subset X$ to a cover of all of $X$. We are using these obstructions to obtain generalizations of the  KKM  and Sperner lemmas.  In particular, we show that in the case when $A$ is a $k$--sphere and $X$ is a $(k+1)$--disk there exist KKM type lemmas for covers by $n+2$ sets if and only if the homotopy group $\pi_{k}({\Bbb S}^{n})\ne0$. In Section 2 is given a review of main results of \cite{MusH}. 

In Section 3 we generalize Gale's lemma. In particular, see Corollary 3.1, we pove that {\it if each of $n$ people paint a $k$--simplex with $n$ colors such that the union of these covers is not null-homotopic on the boundary,  then there will be a point which is in the first color set of one person, the second color set of another, and so on.}

In Section 4 we consider KKMS type theorems. Actually, these theorems are analogs for covers of a polytopal type Sperner's lemmas, see \cite{  DeLPS, MusSpT, MusH}.  Let $V$ be a set of $m$ points in ${\Bbb R}^n$. Then, see Corollary 4.1, {\it if   $\mathcal F = \{F_1,\ldots, F_m\}$ is a cover of a $k$--simplex that is not null--homotopic on the boundary, then there is a balanced with respect to $V$ subset $\mathcal B$ in $\{1,\ldots,m\}$ such that all the $F_i$, for $i\in \mathcal B$, have a common point.}  

If $V$ is the set of vertices of a $k$--simplex $\Delta^k$, then this corollary implies the KKM theorem and if $V$ is the set of all centers of $\Delta^k$ it yields  the KKMS theorem. As an example, we consider  a generalization of Tucker's lemma (Corollary 4.3). (Note that David Gale, Lloyd Shapley as well as John F. Nash were Ph.D. students of Albert W. Tucker in Princeton.)

\medskip 

\noindent{\bf Notations.}  Throughout this paper we consider only normal topological spaces, all simplicial complexes be finite, all manifolds be compact and piecewise linear, $I_n$ denotes the set $\{1,\ldots,n\}$,  $\Delta^n$ denotes the  $n$--dimensional simplex, ${\Bbb S}^{n}$ denotes the $n$--dimensional unit sphere, ${\Bbb B}^{n}$ denotes the $n$--dimensional unit disk and $|K|$ denotes the underlying space of a simplicial complex $K$. 
We shall denote the set of homotopy classes of continuous maps from X to Y as $[X,Y]$.

\section{Sperner -- KKM lemma with boundary conditions}

\medskip

The $(n-1)$--dimensional unit simplex $\Delta^{n-1}$ is defined by 
$$
\Delta^{n-1}:=\{x\in{\Bbb R}^n\, |\, x_i\ge0, x_1+\ldots+x_n=1\}. 
$$ 
Let $v_i:=(x_1,\ldots,x_n)$ with $x_i=1$ and $x_j=0$ for $j\ne i$. Then $v_1,\ldots, v_n$ is  the set of vertices of $\Delta^{n-1}$ in ${\Bbb R}^{n}$.

Let $K$ be a simplicial complex. Denote by $\vrt(K)$ the vertex set of $K$. An $n$--labeling $L$ is a map $L:\vrt(K)\to\{1,2,\ldots,n\}$.  
 Setting 
$$
f_{L}(u):=v_k, \; \mbox{ where } \; u\in\vrt(K) \mbox{ and } k=L(u), 
$$
we have a map $f_{L}:\vrt(K)\to{\Bbb R}^{n}$. 
Every point $p\in |K|$ belongs to the interior of exactly one simplex in $K$. Letting $\sigma=\conv\{u_0,u_1,\ldots,u_k\}$ be the simplex, we have $p=\sum_0^k\lambda_i(p)u_i$ with $\sum_0^k\lambda_i(p)=1$ and all $\lambda_i>0.$ (Actually, $\lambda_i(p)$ are the barycentric coordinates of $p$.) Then $f_{L}$ can be extended to a continuous (piecewise linear) map $f_{L}:|K|\to\Delta^{n-1}\subset{\Bbb R}^{n}$ defined by 
$$
f_{L}(p) = \sum\limits_{i=0}^k{\lambda_i(p)f_{L}(u_i)}. 
$$

We say that a simplex $s$ in $K$ is {\it fully labeled} if $s$ is labeled with a complete set of labels $\{1,2,\ldots,n\}$. Suppose there are no fully labeled  simplices in $K$. Then $f_L(p)$ lies in the boundary of $\Delta^{n-1}$. Since the boundary $\partial\Delta^{n-1}$ is homeomorphic to the sphere ${\Bbb S}^{n-2}$, we have a continuous map 
 $f_{L}:|K|\to {\Bbb S}^{n-2}$. Denote the homotopy class $[f_{L}]\in[|K|,{\Bbb S}^{n-2}]$ by $\mu(L)=\mu(L,K)$. 

\medskip

\noindent {\bf Example 2.1.} 
Let $L:\vrt(K)\to\{1,2,3\}$ be a labelling of a closed planar polygonal line $K$ with vertices $p_1p_2\ldots p_k$. 
Then $f_{L}$ is map from $|K|={\Bbb S}^1$ to ${\Bbb S}^1$. Not that $\mu(L)\in[{\Bbb S}^1,{\Bbb S}^1]={\Bbb Z}$ is the degree of the map $f_{L}$.  Moreover, $$\mu(L)=\deg(f_{L}):=p_*-n_*,$$ 
where $p_*$ (respectively, $n_*$) is the number of (ordering) pairs $(p_i,p_{i+1})$ such that $L(p_i)=1$ and $L(p_{i+1})=2$ (respectively, $L(p_i)=2$ and $L(p_{i+1})=1$). It is clear, that instead of $[1,2]$ we can take $[2,3]$ or $[3,1]$.

For instance, let $L=(1221231232112231231)$. Then $p_*=5$ and $n_*=2$. Thus, $$\mu(L)=5-2=3.$$

\medskip 

\noindent {\bf Example 2.2.} Let $K$ be a triangulation of the boundary of a simplex $\Delta^{k+1}$. In other words, $K$ is a triangulation of of ${\Bbb S}^k$). Let $L:\vrt(K)\to\{1,\ldots,n\}$ be a labeling such that $K$ has no simplices with $n$ distinct labels. Then $f_L\in\pi_k({\Bbb S}^{n-2})$. 

In the case $k=n-2$ we have $\pi_k({\Bbb S}^{k})={\Bbb Z}$ and 
$$[f_L]=\deg(f_L)\in{\Bbb Z}.$$
(Here by $\deg(f)$ is denoted the degree of a continuous map $f$ from  ${\Bbb S}^k$  to itself.)

For instance, let $L$ be a {\it Sperner labeling} of a triangulation $K$ of $\partial\Delta^{n-1}=u_1\ldots u_n$. The rules of this labeling are:\\
(i) The vertices of $\Delta^{n-1}$ are colored with different colors, i. e. $L(u_i)=i$ for $1\le i \le n$.\\
(ii)   Vertices of $K$ located on any $m$-dimensional subface of the large simplex
$u_{i_0}u_{i_1}\ldots u_{i_{m}}$ are colored only with the colors $i_0,i_1,\ldots,i_{m}.$ 
\\ Then $\mu(L)=\deg(f_L)=1$ in $[{\Bbb S}^{n-2},{\Bbb S}^{n-2}]={\Bbb Z}$. 

\medskip

In \cite{MusH} we proved the following theorem, see \cite[Corollary 3.1]{MusH}. 
\begin{theorem} \label{SpT1}  Let $T$ be a triangulation of a simplex $\Delta^{k+1}$.  Let $L:\vrt(T)\to\{1,\ldots,n\}$ be a labeling such that   $T$ has no simplices on the boundary with $n$ distinct labels. If $\mu(L,\partial T)\ne 0$, then $T$ must contain a fully labeled  simplex. 

(Here by $\mu(L,\partial T)$ we denote the invariant $\mu$ on the boundary of $T$.) 
\end{theorem}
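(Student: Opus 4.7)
My plan is to prove this by contradiction using an extension argument, which is the standard obstruction-theoretic reformulation of Sperner-type statements.

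Suppose, for contradiction, that $T$ contains no fully labeled simplex. I would first observe that if no simplex $\sigma\in T$ is labeled with the full set $\{1,\ldots,n\}$, then for every point $p$ lying in the interior of any simplex $\sigma=\conv\{u_0,\ldots,u_k\}$, the barycentric combination
$$f_L(p)=\sum_{i=0}^{k}\lambda_i(p)\,f_L(u_i)$$
has at least one coordinate equal to zero, so $f_L(p)\in\partial\Delta^{n-1}$. Thus the piecewise linear extension $f_L$ actually takes values in $\partial\Delta^{n-1}\cong{\Bbb S}^{n-2}$ on all of $|T|$, not just on the boundary. The hypothesis on the boundary of $T$ is then automatic from this stronger conclusion, but it is precisely this extended map whose existence is the point.

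Next I would use the fact that $|T|=\Delta^{k+1}$ is a disk, hence contractible. Consequently, any continuous map $|T|\to{\Bbb S}^{n-2}$ is null-homotopic. Restricting the homotopy to $\partial T$, this produces a null-homotopy of $f_L\big|_{\partial T}:\partial T\to{\Bbb S}^{n-2}$. Therefore the homotopy class $\mu(L,\partial T)=[f_L\big|_{\partial T}]\in[\partial T,{\Bbb S}^{n-2}]$ is the zero class, contradicting the hypothesis $\mu(L,\partial T)\neq 0$.

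The mild obstacle, and really the only place where one needs to be careful, is justifying the first step cleanly: one must argue that the absence of a fully labeled simplex anywhere in $T$ (interior or boundary) is what is actually needed for the image to land in $\partial\Delta^{n-1}$, and this is exactly what the contradiction hypothesis supplies. The boundary hypothesis in the statement of the theorem is used only to ensure that $\mu(L,\partial T)$ is well-defined as a class in $[\partial T,{\Bbb S}^{n-2}]$; the argument then runs by showing that the absence of any fully labeled simplex forces $f_L$ to extend this boundary map over the disk $|T|$, which automatically trivializes the homotopy class.
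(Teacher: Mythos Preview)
Your argument is correct and is precisely the obstruction-theoretic argument underlying the paper's framework: the paper does not supply an independent proof here but cites \cite[Corollary~3.1]{MusH}, and the reasoning there (and in the analogous Theorem~\ref{th3} proved in this paper) is exactly yours---if no fully labeled simplex exists then $f_L$ lands in $\partial\Delta^{n-1}\cong{\Bbb S}^{n-2}$ on all of $|T|$, hence extends $f_L|_{\partial T}$ over the contractible disk $|T|=\Delta^{k+1}$, forcing $\mu(L,\partial T)=0$.
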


Since for a Sperner labeling $\mu(L,\partial T)=1\ne0$, Theorem \ref{SpT1} implies: 

\medskip 

\noindent{\bf (Sperner's lemma \cite{Sperner})} 
	{\it Every Sperner labeling of a triangulation of $\Delta^{n-1}$ contains a cell labeled with a complete set of labels: $\{1,2,\ldots, n\}$.}
	
\medskip

Consider  an oriented manifold $M$ of dimension $(n-1)$ with boundary. 
Then $[\partial M,{\Bbb S}^{n-2}]={\Bbb Z}$ and for any continuous $f:\partial M\to{\Bbb S}^{n-2}$ we have $[f]=\deg{f}$.  If $T$ is a triangulation of $M$ and $L:\vrt(T)\to\{1,\ldots,n\}$ is a labeling  then we denote by $\deg(L,\partial T)$ the class $\mu(L,\partial T)$. 

\begin{theorem} \cite[Theorem 3.4]{MusH}
\label{th34} 
Let $T$ be a triangulation of an oriented manifold $M$ of dimension $(n-1)$ with boundary. 
Then for a labeling $L:\vrt(T)\to \{1,\ldots,n\}$ the triangulation  must  contain at least $|\deg(L,\partial T)|$ fully labelled simplices. 
\end{theorem}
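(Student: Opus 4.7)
The plan is to read the theorem as a degree-counting statement about the piecewise linear map $f_L:|T|\to\Delta^{n-1}$ constructed in Section 2. First I would choose a point $q$ in the interior of $\Delta^{n-1}$ disjoint from the image under $f_L$ of every simplex of $T$ of dimension less than $n-1$. A simplex of $T$ can carry all $n$ labels only if it is $(n-1)$-dimensional, and on any such fully labeled simplex $\sigma$ the restriction $f_L|_\sigma$ is an affine bijection onto $\Delta^{n-1}$; hence $f_L^{-1}(q)$ contains precisely one interior point in each fully labeled simplex of $T$, and no other points. Since every simplex of $\partial T$ has dimension at most $n-2$ and therefore carries at most $n-1$ labels, $f_L(\partial M)\subset\partial\Delta^{n-1}\cong{\Bbb S}^{n-2}$, so $\deg(L,\partial T)=\deg(f_L|_{\partial M})$ is well defined.

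Next I would attach to each fully labeled $\sigma$ a sign $\epsilon(\sigma)=\pm 1$ recording whether $f_L|_\sigma$ preserves or reverses the chosen orientation of $M$. The crux of the argument is the PL degree identity
\[
\sum_\sigma\epsilon(\sigma)\;=\;\deg\bigl(f_L|_{\partial M}\bigr)\;=\;\deg(L,\partial T),
\]
the sum running over all fully labeled simplices of $T$. This follows from the long exact sequence of the pair $(M,\partial M)$: the map of pairs $f_L:(M,\partial M)\to(\Delta^{n-1},\partial\Delta^{n-1})$ sends the fundamental class of $(M,\partial M)$ to the signed preimage count $\sum_\sigma\epsilon(\sigma)$ times the fundamental class of $(\Delta^{n-1},\partial\Delta^{n-1})$, and by naturality of the connecting homomorphism the same integer equals the degree of the induced boundary map. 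Equivalently, one can argue geometrically by taking a PL arc from $q$ to a regular value in $\partial\Delta^{n-1}$: its preimage is a compact $1$-manifold in $M$ whose oriented boundary equates the interior preimages of $q$ with the preimages of the boundary endpoint, forcing the two signed counts to agree.

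Once this identity is in place, the triangle inequality gives
\[
\#\{\text{fully labeled simplices of }T\}\;\ge\;\Bigl|\sum_\sigma\epsilon(\sigma)\Bigr|\;=\;|\deg(L,\partial T)|,
\]
which is exactly the bound claimed.

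I expect the main obstacle to be setting up the signed degree identity cleanly for a piecewise linear map that is far from a submersion: outside the fully labeled simplices the map $f_L$ can collapse cells onto the boundary of $\Delta^{n-1}$, so one must justify carefully that only fully labeled simplices contribute to the preimage of a generic interior point and that their orientation signs assemble into the boundary degree. Genericity of $q$ in the interior of $\Delta^{n-1}$ kills the collapsing issue in the interior, and the absence of fully labeled simplices on $\partial T$ makes the boundary accounting automatic; after those two observations the identification with $\deg(L,\partial T)$ reduces to the classical cobordism-or-homology degree formula for oriented manifolds with boundary.
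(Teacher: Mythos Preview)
Your argument is correct and is precisely the standard degree-counting proof one would expect for this statement. Note, however, that the present paper does \emph{not} supply its own proof of this theorem: it is quoted verbatim from \cite[Theorem~3.4]{MusH} and used as a black box, so there is nothing here to compare your proposal against directly.

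That said, your outline matches the approach of the cited source. The key points you identify are exactly the right ones: (i) any simplex of $T$ that is not fully labeled (including, automatically, every boundary simplex, since $\dim\partial M=n-2$) is mapped by $f_L$ into $\partial\Delta^{n-1}$, so $f_L$ is a map of pairs $(M,\partial M)\to(\Delta^{n-1},\partial\Delta^{n-1})$; (ii) the preimage of an interior point $q$ meets each fully labeled $(n-1)$--simplex in exactly one point, with a well-defined orientation sign; and (iii) the signed count equals $\deg(f_L|_{\partial M})=\deg(L,\partial T)$ by the usual relative-degree/boundary-degree identification. The triangle inequality then finishes it. One small simplification: you do not really need to choose $q$ generically, since \emph{every} simplex that is not a fully labeled top-dimensional one lands in $\partial\Delta^{n-1}$; any interior $q$ already works.
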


In Fig.1 is shown an illustration of Theorem \ref{th34}. We have a labeling with  $\deg(L,\partial T)=3$. Therefore, the theorem garantee that there are at least three fully labeled triangles. 

\medskip 

\begin{figure}
\begin{center}

  \includegraphics[clip,scale=0.9]{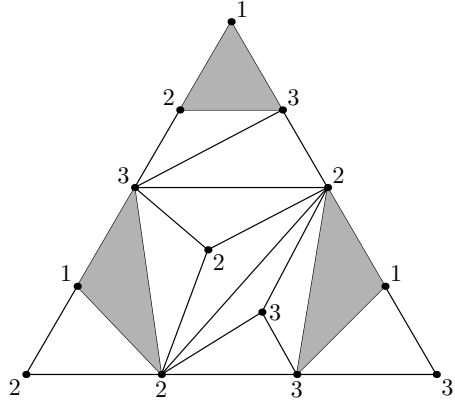}
\end{center}
\caption{$\deg(L,\partial T)=3$. There are three fully labeled triangles.}
\end{figure}

\medskip 

Actually, a labeling can be considered as a particular case of a covering.  For any labeling $L$ there is a natural open cover of $|K|$. 
The open star of a vertex $u\in\vrt(K)$ (denoted $\st(u)$) is $|S|\setminus |B|$, where $S$  is the set of all simplices in $K$ that contain $u$, and $B$ is the set of all simplices in $S$ that contain no $u$.  Let 
 $$ \mathcal U_L(K)=\{U_1(K),\ldots,U_n(K)\},$$  where 
$$
U_\ell(K):=\bigcup\limits_{u\in W_\ell}{\st(u)}, \; \; W_\ell:=\{u\in\vrt(K): L(u)=\ell\}. 
$$
It is clear, that $K=U_1(K)\bigcup\ldots\bigcup{U_{n}(K)}$, i. e. $\mathcal U_L(K)$ is a cover of $K$. 

\medskip

Now we extend the definition of $\mu(L)$ for covers. 

Let $\mathcal U=\{U_1,\ldots,U_n\}$ be  a collection of open sets whose union contains a space $T$. In other words, $\mathcal U$ is a cover of $T$.  Let  $\Phi=\{\varphi_1,\ldots,\varphi_n\}$ be a partition of unity subordinate to $\mathcal U$, i. e. $\Phi$ is a collection of non--negative functions on $T$ such that supp$(\varphi_i)\subset U_i, \, i=1,\ldots,n,$  and  for all $x\in T$, $\sum_1^n{\varphi_i(x)}=1$. 
Let 
$$
f_{\mathcal U,\Phi}(x):=\sum\limits_{i=1}^n{\varphi_i(x)v_i},
$$
where $v_1,\ldots,v_n$, as above, are vertices of $\Delta^{n-1}$.

Suppose the intersection of all $U_i$ is empty.   
Then $f_{\mathcal U,\Phi}$ is a continuous map from $T$ to ${\Bbb S}^{n-2}$. 

In \cite[Lemmas 2.1 and 2.2]{MusH} we proved that a homotopy class $[f_{\mathcal U,\Phi}]$ in $[T,{\Bbb S}^{n-2}]$ does not depend on $\Phi$. We denote it by $\mu(\mathcal U)$.

Note that for a labeling $L:\vrt(K)\to\{1,2,\ldots,n\}$ we have 
$$
\mu(L)=\mu(\mathcal U_L(K))\in [|K|,{\Bbb S}^{n-2}].
$$

\medskip 

\noindent {\bf Example 2.3.} (a)  Let $T={\Bbb S}^k$ and $\mathcal U=\{U_1,\ldots,U_n\}$ be an open cover of $T={\Bbb S}^k$ such that the intersection of all $U_i$ is empty. Then $\mu(\mathcal U)\in\pi_k({\Bbb S}^{n-2})$. In the case $k=n-2$ we have 
$$\mu(\mathcal U)=\deg(f_{\mathcal U})\in{\Bbb Z}.$$

(b) Let $h:{\Bbb S}^k\to{\Bbb S}^{n-2}$ be any continuous map. Actually, ${\Bbb S}^{n-2}$ can be considered as the boundary of the simplex $\Delta^{n-1}$. 
Let 
$$
U_i:=h^{-1}(U_i(\Delta^{n-1}), \; i=1,\ldots,n \, \; \mathcal U=\{U_1,\ldots,U_{n}\}. 
$$
Then $\mu(\mathcal U)=[h]\in \pi_k({\Bbb S}^{m-1})$.

For instance, if $h:{\Bbb S}^3\to{\Bbb S}^2$ is the Hopf fibration, then 
 $\mu(\mathcal U)=1\in \pi_3({\Bbb S}^2)={\Bbb Z}$. 

\medskip

 In fact, see  \cite[Lemma 2.4]{MusH}, the homotopy classes of covers are also well defined for closed sets. 

\begin{df}
  We call a family of sets $\mathcal S=\{S_1,\ldots,S_n\}$  as a cover of a space $T$ if $\mathcal S$ is either an open or closed cover of $T$.
\end{df}

\begin{df}  Let $\mathcal S=\{S_1,\ldots,S_n\}$   be a cover of a space $T$. We say that $\mathcal S$ is not null--homotopic if   the intersection of all $S_i$ is empty  and $\mu(\mathcal S)\ne0$ in $[T,{\Bbb S}^{n-2}]$. 
\end{df}

Note that covers in  Examples 2.1, 2.2 and 2.3 are not null--homotopic. Actually, these examples and \cite[Theorems 2.1--2.3]{MusH} imply the following theorem. 

\begin{theorem} \label{th21} Let $\mathcal S=\{S_1,\ldots,S_m\}$   be a cover of a space $T$. Suppose the intersection of all $S_i$ is empty. 
\begin{enumerate}
	\item Let $T$ be ${\Bbb S}^k$. Then   $\mathcal S$ is not null--homotopic if  and only if $\pi_k({\Bbb S}^{n-1})$ is not trivial and $\mu({\mathcal S})\ne0$  in this group.  
	\item Let $T$ be an oriented $(n-2)$--dimensional manifold. Then   $\mathcal S$ is not null--homotopic if  and only if $\deg(f_{\mathcal S})\ne0$. 
	\end{enumerate}	
\end{theorem}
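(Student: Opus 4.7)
The plan is to reduce both parts to classical facts about homotopy classes of maps into spheres, applied to the map $f_{\mathcal S,\Phi}\colon T\to{\Bbb S}^{n-2}$ whose well-defined homotopy class is $\mu(\mathcal S)$. In both cases the hypothesis that the intersection of all $S_i$ is empty is what makes $f_{\mathcal S,\Phi}$ well-defined as a map into the boundary sphere of $\Delta^{n-1}$, so that follows directly from the setup in Section 2 and from \cite[Lemmas 2.1, 2.2, 2.4]{MusH} (which also dispose of the dependence on $\Phi$ and the open/closed distinction). With this in hand, the condition ``$\mathcal S$ is not null-homotopic'' is by definition just ``$\mu(\mathcal S)\ne 0$ in $[T,{\Bbb S}^{n-2}]$,'' and what remains is to identify this hom-set concretely in each case.

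For part (1), with $T={\Bbb S}^k$, I would invoke the standard identification $[{\Bbb S}^k,{\Bbb S}^{n-2}]=\pi_k({\Bbb S}^{n-2})$. For $n-2\ge 2$ this is immediate because the target is simply connected, so free and based homotopy classes coincide; for $n-2=1$ and $k\ge 2$ both sides are zero, and for $k=n-2=1$ one uses that $\pi_1({\Bbb S}^1)={\Bbb Z}$ is abelian so the action of $\pi_1$ on itself is trivial. Once this identification is noted, the equivalence is essentially tautological: $\mu(\mathcal S)\ne 0$ forces the ambient group $\pi_k({\Bbb S}^{n-2})$ to be nontrivial, and conversely any vanishing of this group forces $\mu(\mathcal S)=0$.

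For part (2), with $T$ a closed oriented manifold of dimension $n-2$, the main tool is the Hopf classification theorem: for a closed oriented $d$-manifold $M$, continuous maps $M\to{\Bbb S}^d$ are classified up to homotopy by their degree, so $[M,{\Bbb S}^d]={\Bbb Z}$ via $[f]\mapsto\deg f$. Applying this with $d=n-2$ gives $[T,{\Bbb S}^{n-2}]={\Bbb Z}$, so $\mu(\mathcal S)$ is completely encoded by $\deg(f_{\mathcal S,\Phi})$, which (by the aforementioned lemmas from \cite{MusH}) is an invariant of $\mathcal S$ alone. Thus $\mu(\mathcal S)\ne 0$ is equivalent to $\deg(f_{\mathcal S})\ne 0$.

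The main obstacle I anticipate is not conceptual but bookkeeping: being careful about the distinction between based and unbased homotopy classes in part (1) (so that one really has a group element whose vanishing is meaningful), and checking that the conventions used to define $\deg(f_{\mathcal S})$ in part (2) match the Hopf-theorem convention (orientation of $M$ versus orientation of the target sphere ${\Bbb S}^{n-2}\cong\partial\Delta^{n-1}$). Both are essentially standard, and no new combinatorial or geometric input is needed beyond the homotopy invariants already constructed in Section 2.
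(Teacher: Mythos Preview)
Your proposal is correct and is essentially the approach the paper takes: the paper does not give a self-contained argument but simply asserts that the theorem follows from Examples 2.1--2.3 together with \cite[Theorems 2.1--2.3]{MusH}, and your argument (identify $[T,{\Bbb S}^{n-2}]$ with $\pi_k({\Bbb S}^{n-2})$ in part (1) and with ${\Bbb Z}$ via the Hopf degree theorem in part (2), then read off Definition~2.2) is exactly the natural unpacking of those citations. Note also that you have tacitly corrected an index slip in the printed statement---with a cover by $m$ sets the target sphere is ${\Bbb S}^{m-2}$, so the group in part (1) should be $\pi_k({\Bbb S}^{m-2})$ rather than $\pi_k({\Bbb S}^{n-1})$.
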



Now we consider a generalization of the KKM theorem for covers of spaces. 

\begin{df}	
Consider a pair $(X,A)$, where $A$ is a subspace of a space $X$. Let  $\mathcal S=\{S_1,\ldots,S_m\}$ be a cover of $X$ and $\mathcal C=\{C_1,\ldots,C_m\}$ be a cover of $A$. 
We say that $\mathcal S$ is an extension of $\mathcal C$ and write  $ \mathcal C=\mathcal S|_A$ if  $C_i=S_i\cap A$ for all $i$. 
\end{df}

\begin{df}
We say that a pair of spaces $(X,A)$, where $A\subset X$, belongs to $\ep_n$ and write $(X,A)\in\ep_n$ if   there is a continuous map $f:A\to {\Bbb S}^n$  with $[f]\ne0$ in $[A,{\Bbb S}^n]$ that cannot be extended to a continuous map $F:X\to {\Bbb S}^n$ with $F|_A=f$. 
\end{df}

We denoted this class of pairs by $\ep$ after S. Eilenberg and L. S. Pontryagin  who initiated obstruction theory in the late 1930s.    Note that \cite[Theorem 2.3]{MusH} yield that 
\\(i) if $\pi_k({\Bbb S}^n)\ne0$, then $({\Bbb B}^{k+1},{\Bbb S}^k)\in\ep_n$, \\
(ii) if $X$ is an oriented $(n+1)$--dimensional manifold and $A=\partial X$, then $(X,A)\in\ep_n$. 

\begin{theorem} \label{thKKM1}  Let $A$ be a subspace of a space $X$. Let  $(X,A)\in\ep_{n-2}$.  
Let  $\mathcal S=\{S_1,\ldots,S_n\}$  be  a cover of $(X,A)$. Suppose the cover $\mathcal S$ is not null--homotopic on $A$.  Then  all the $S_i$ have a common intersection point. 
\end{theorem}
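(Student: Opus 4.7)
The plan is to argue by contradiction. Suppose $\bigcap_{i=1}^n S_i = \emptyset$ in $X$. Since $X$ is normal and $\mathcal{S}$ is either open or closed, I can choose a partition of unity $\Phi=\{\varphi_1,\ldots,\varphi_n\}$ subordinate to $\mathcal{S}$ (the open case is standard; the closed case is handled by Lemma 2.4 of \cite{MusH}, cited above). Form $f_{\mathcal{S},\Phi}(x):=\sum_i \varphi_i(x)v_i$ exactly as in Section 2. The empty-intersection assumption forces at least one coordinate $\varphi_i(x)$ to vanish at each $x$, so the image lies in $\partial\Delta^{n-1}\cong {\Bbb S}^{n-2}$, and $F:=f_{\mathcal{S},\Phi}$ is a continuous map $X\to {\Bbb S}^{n-2}$.

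Next, I restrict to $A$. The family $\Phi|_A$ is a partition of unity on $A$ subordinate to $\mathcal{S}|_A$, and $F|_A$ is literally $f_{\mathcal{S}|_A,\Phi|_A}$. By the homotopy-invariance of $\mu$ proved in \cite{MusH} (Lemmas 2.1, 2.2, 2.4), the class $[F|_A]$ equals $\mu(\mathcal{S}|_A)$, which is nonzero by hypothesis. Thus I have constructed a continuous extension $F\colon X\to {\Bbb S}^{n-2}$ of a non-null-homotopic map on $A$.

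The last step is to convert this into a contradiction with $(X,A)\in\ep_{n-2}$. The hypothesis must be read in the form supplied by Theorem 2.3 of \cite{MusH}: for the pairs admitted (a disk/sphere pair $({\Bbb B}^{k+1},{\Bbb S}^k)$ where every continuous map to ${\Bbb S}^{n-2}$ extends only as a null class because ${\Bbb B}^{k+1}$ is contractible, or an oriented $(n-1)$-manifold with its boundary where a nonzero degree obstructs extension), no nontrivial class in $[A,{\Bbb S}^{n-2}]$ lies in the image of the restriction $[X,{\Bbb S}^{n-2}]\to[A,{\Bbb S}^{n-2}]$. Under this reading, the map $F$ contradicts $(X,A)\in\ep_{n-2}$, so $\bigcap_i S_i$ must be nonempty.

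The main obstacle is precisely this last step. The literal Definition as written only guarantees the existence of one nontrivial non-extendable class, whereas the argument needs the specific class $\mu(\mathcal{S}|_A)$ to be non-extendable. The cleanest fix is to interpret $\ep_{n-2}$ as the stronger statement ``the restriction $[X,{\Bbb S}^{n-2}]\to[A,{\Bbb S}^{n-2}]$ has image contained in the trivial class'', which is what the examples in Theorem 2.3 actually supply. Alternatively, for pairs satisfying the homotopy extension property (polyhedral or CW inclusions, which cover all intended applications) extendability is a homotopy invariant of the class, and one may then transfer the single ``witness'' given by the definition to the class $\mu(\mathcal{S}|_A)$ via a straightforward obstruction-theoretic reduction.
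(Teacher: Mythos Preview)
Your argument is exactly the one the paper uses: the statement is given without a separate proof in Section~2, but the proof of its generalization, Theorem~\ref{th3}, is precisely the contradiction you run---assume empty intersection, so the associated map lands in $\partial\Delta^{n-1}\cong{\Bbb S}^{n-2}$ on all of $X$ and extends the nontrivial boundary class, contradicting $(X,A)\in\ep_{n-2}$. Your final paragraph is in fact more scrupulous than the paper, which in the proof of Theorem~\ref{th3} simply writes ``$[f_{\mathcal S,V}]\ne0$, a contradiction'' without addressing the existential-versus-universal reading of Definition~2.4; the intended interpretation is indeed the stronger one you identify, which the examples supplied by \cite[Theorem~2.3]{MusH} satisfy.
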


\begin{cor}  \label{KKM1}   Let  $\mathcal S=\{S_1,\ldots,S_n\}$  be  a cover of  ${\Bbb B}^k$. Suppose  $\mathcal S$ is not null--homotopic on the boundary of  ${\Bbb B}^k$.  Then  all the $S_i$ have a common intersection point. 
\end{cor}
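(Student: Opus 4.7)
\noindent\textit{Proof plan for Corollary \ref{KKM1}.} The plan is to verify that the pair $(X,A) = (\mathbb{B}^k, \mathbb{S}^{k-1})$ satisfies all the hypotheses of Theorem \ref{thKKM1} with ambient sphere dimension $n-2$, so that the corollary becomes an immediate instance of that theorem. There are essentially two checks: the membership $(\mathbb{B}^k, \mathbb{S}^{k-1}) \in \ep_{n-2}$, and the non-null-homotopy of the restricted cover on $A$.

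First I would unpack the hypothesis. By Definition 2.2, saying that $\mathcal S$ is not null-homotopic on $\partial \mathbb{B}^k = \mathbb{S}^{k-1}$ means that the sets $S_i \cap \mathbb{S}^{k-1}$ have empty common intersection and that the homotopy class
\[
\mu\bigl(\mathcal S|_{\mathbb{S}^{k-1}}\bigr) \;\ne\; 0 \quad \text{in} \quad \bigl[\mathbb{S}^{k-1},\mathbb{S}^{n-2}\bigr] \;=\; \pi_{k-1}(\mathbb{S}^{n-2}).
\]
In particular, the existence of this non-zero class forces $\pi_{k-1}(\mathbb{S}^{n-2})$ to be non-trivial.

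Next I would invoke statement (i) from the excerpt (recorded immediately after Definition 2.4): if $\pi_k(\mathbb{S}^n) \ne 0$, then $(\mathbb{B}^{k+1},\mathbb{S}^k) \in \ep_n$. Reindexing with $k \mapsto k-1$ and $n \mapsto n-2$, the non-triviality of $\pi_{k-1}(\mathbb{S}^{n-2})$ established in the previous step yields
\[
(\mathbb{B}^k,\mathbb{S}^{k-1}) \in \ep_{n-2}.
\]
Geometrically this is the elementary obstruction fact that a map $\mathbb{S}^{k-1} \to \mathbb{S}^{n-2}$ extends over $\mathbb{B}^k$ if and only if it is null-homotopic, so once a non-null-homotopic map exists, a non-extendible one exists.

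Finally, I would apply Theorem \ref{thKKM1} directly with $X = \mathbb{B}^k$, $A = \mathbb{S}^{k-1}$, and the given cover $\mathcal S = \{S_1,\ldots,S_n\}$: the pair belongs to $\ep_{n-2}$, and the cover is not null-homotopic on $A$, so the theorem guarantees $\bigcap_{i=1}^n S_i \ne \emptyset$, which is exactly the conclusion. I do not anticipate any real obstacle here; the only step that requires care is aligning the indices so that the target sphere $\mathbb{S}^{n-2}$ in the definition of $\mu$ matches the subscript $n-2$ in $\ep_{n-2}$, and that $(\mathbb{B}^k,\mathbb{S}^{k-1})$ (rather than $(\mathbb{B}^{k+1},\mathbb{S}^k)$) is the pair to which statement (i) is applied after reindexing.
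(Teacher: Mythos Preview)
Your proposal is correct and matches the paper's intended argument: the corollary is stated without proof immediately after Theorem~\ref{thKKM1}, and the implicit derivation is exactly the one you give---use statement (i) (that $\pi_{k-1}(\mathbb{S}^{n-2})\ne 0$ implies $(\mathbb{B}^k,\mathbb{S}^{k-1})\in\ep_{n-2}$), note that the non-null-homotopy hypothesis forces $\pi_{k-1}(\mathbb{S}^{n-2})\ne 0$, and apply Theorem~\ref{thKKM1}. Your care with the index shift and your remark that for the pair $(\mathbb{B}^k,\mathbb{S}^{k-1})$ extendibility over the disk is equivalent to null-homotopy are both apt.
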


\begin{cor} \label{cor22}  Let  $\mathcal S=\{S_1,\ldots,S_n\}$  be  a cover   of a manifold $M$ of dimension $(n-1)$ with boundary. Let 
 $ \mathcal C:=\mathcal S|_{\partial M}. $
Suppose $\deg{f_{\mathcal C}}\ne 0$.     Then  all the $S_i$ have a common intersection point. 
\end{cor}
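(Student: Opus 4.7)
The idea is to reduce the statement to Theorem~\ref{thKKM1} applied to the pair $(X,A)=(M,\partial M)$ and the given cover $\mathcal S$. That theorem has two hypotheses, namely that $(M,\partial M)\in\ep_{n-2}$ and that $\mathcal S$ is not null--homotopic on $\partial M$; the proof is just a matter of supplying each.

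The membership $(M,\partial M)\in\ep_{n-2}$ is immediate from item (ii) in the remark recorded just after Definition~2.4: every oriented $(k+1)$--manifold $X$ with $A=\partial X$ lies in $\ep_k$. Here $M$ is an oriented manifold of dimension $n-1$ (orientability is implicit once one speaks of $\deg f_{\mathcal C}$), so taking $k=n-2$ yields the desired membership with no further work.

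For the second hypothesis, Definition~2.4 demands two things of $\mathcal C:=\mathcal S|_{\partial M}=\{C_1,\dots,C_n\}$: that $\bigcap_{i=1}^n C_i=\varnothing$, and that $\mu(\mathcal C)\neq 0$ in $[\partial M,\mathbb S^{n-2}]$. The emptiness is packaged into the hypothesis itself, because for $\deg f_{\mathcal C}$ even to be defined, $f_{\mathcal C}$ must land in $\partial\Delta^{n-1}\cong\mathbb S^{n-2}$, which happens precisely when the total intersection is empty. Granting this, $\partial M$ is an oriented $(n-2)$--manifold covered by $n$ sets with empty common intersection, and Theorem~\ref{th21}(2) then asserts that $\mathcal C$ is not null--homotopic if and only if $\deg f_{\mathcal C}\neq 0$, which is exactly our assumption.

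With both inputs verified, Theorem~\ref{thKKM1} delivers a point lying in every $S_i$, as required. There is no substantive obstacle here; the work is purely bookkeeping, aligning the dimension $n-1$ of $M$, the dimension $n-2$ of $\partial M$, the target sphere $\mathbb S^{n-2}$, and the subscript of $\ep$, and then observing that the scalar hypothesis $\deg f_{\mathcal C}\neq 0$ encodes both clauses in the definition of ``not null--homotopic''.
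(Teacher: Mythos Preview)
Your argument is correct and is precisely the intended derivation: the paper presents Corollary~\ref{cor22} as an immediate consequence of Theorem~\ref{thKKM1} together with the observation (ii) that $(M,\partial M)\in\ep_{n-2}$ for oriented $(n-1)$--manifolds, and Theorem~\ref{th21}(2) to translate $\deg f_{\mathcal C}\neq 0$ into ``not null--homotopic''. One small slip: where you write ``Definition~2.4 demands two things of $\mathcal C$'', you mean Definition~2.2 (the definition of \emph{not null--homotopic}); Definition~2.4 is the $\ep_n$ class.
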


 We say that a cover  $\mathcal S:=\{S_1,\ldots,S_m\}$  of a simplex $\Delta^{m-1}$  is a {\it KKM cover} if for all $J \subset I_m$ the face of $\Delta^{m-1}$ that is spanned by vertices $v_i$ for $i \in J$ is covered by $S_i$ for $i \in J$.
 
 Let $\mathcal C:=\mathcal S|_{\partial{\Delta^{n-1}}}.$ Note that if all the $C_i$ have no a common intersection point, then the KKM assumption implies that $\mu(\mathcal C)=\deg(f_{\mathcal C})=1$.    Thus,  Corollary \ref{cor22} yields 

\begin{cor} (KKM theorem \cite{KKM})
If   $\mathcal S:=\{S_1,\ldots,S_n\}$ is a KKM cover of $\Delta^{n-1}$, then all the $S_i$ have a common intersection point.	
\end{cor}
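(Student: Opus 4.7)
The plan is to deduce the statement directly from Corollary \ref{cor22}, applied with $M = \Delta^{n-1}$ regarded as an oriented $(n-1)$--dimensional manifold with boundary. Write $\mathcal C := \mathcal S|_{\partial\Delta^{n-1}} = \{C_1,\ldots,C_n\}$. First dispose of the trivial case: if $\bigcap_i C_i \neq \emptyset$, then since $C_i \subset S_i$ any point of that intersection already lies in all the $S_i$, and we are done. So assume $\bigcap_i C_i = \emptyset$, which is precisely the hypothesis needed to make $f_{\mathcal C}:\partial\Delta^{n-1}\to{\Bbb S}^{n-2}$ well defined.

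By Corollary \ref{cor22} it suffices to show $\deg(f_{\mathcal C}) = 1$. Since Lemmas 2.1--2.2 of \cite{MusH} tell us that $\mu(\mathcal C)$ is independent of the chosen partition of unity, I am free to pick a convenient one. Set
$$
\tilde C_i := C_i \cap \{x \in \partial\Delta^{n-1} : x_i > 0\}, \qquad i \in I_n,
$$
and observe that the KKM property still makes $\{\tilde C_i\}$ a cover of $\partial\Delta^{n-1}$: a point $x$ in the relative interior of the face $F_J := \conv\{v_i : i \in J\}$ has $x_i > 0$ exactly for $i\in J$, while the KKM hypothesis supplies at least one $i \in J$ with $x\in C_i$. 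Then take a partition of unity $\{\varphi_i\}$ subordinate to $\{\tilde C_i\}$ (hence also to $\{C_i\}$, since $\tilde C_i\subset C_i$); by construction the support of each $\varphi_i$ avoids every face $F_J$ with $i \notin J$.

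Consequently $f_{\mathcal C}(x) = \sum_i\varphi_i(x)\,v_i$ maps each face $F_J$ back into $F_J$, and the straight-line homotopy $H(x,t) = (1-t)x + t\, f_{\mathcal C}(x)$ stays inside $F_J$ for every $t\in[0,1]$. Thus $f_{\mathcal C}$ is homotopic to the identity of $\partial\Delta^{n-1}$, which has degree $1$, so $\deg(f_{\mathcal C}) = 1 \neq 0$ and Corollary \ref{cor22} delivers the promised common intersection point.

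The main technical obstacle is the partition-of-unity construction. The argument above is clean when the $C_i$ are open; in the closed case one should first replace each $\tilde C_i$ by a sufficiently small open thickening still contained in $\{x_i>0\}$ and still covering $\partial\Delta^{n-1}$, which is possible by applying the KKM condition separately on each face. With this caveat addressed, the reduction to Corollary \ref{cor22} is entirely formal.
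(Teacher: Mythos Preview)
Your proof is correct and follows exactly the same route as the paper: reduce to Corollary~\ref{cor22} after observing that the KKM boundary condition forces $\deg(f_{\mathcal C})=1$. The paper simply asserts this degree computation in one line before the corollary, whereas you supply the details---the face-preserving partition of unity and the straight-line homotopy to the identity---so your argument is a fleshed-out version of the paper's, not a different approach.
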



\section{Gale's lemma with boundary conditions}

David Gale \cite[Lemma, p. 63]{Gale} proved the following lemma:  

\medskip

{\it For $i,j=1,2,\ldots,n$ \, let \, $S_j^i$\,  be closed sets such that for each $i$, $\{S_1^i,\ldots,S^i_n\}$
is a KKM covering of $\Delta^{n-1}$. Then there exists a permutation $\pi$ of \, $1,2,\ldots,n$ \, such that
$$
\bigcap\limits_{i=1}^n{S^i_{\pi(i)}}\ne\emptyset. 
$$
}

\medskip






Now we generalize this lemma for pairs of spaces.

\begin{theorem} \label{th4}  

Let $A$ be a subspace of a space $X$. Let  $(X,A)\in\ep_{n-2}$. 
Let  $\mathcal S^i=\{S^i_1,\ldots,S^i_n\}$, $i=1,\ldots,n$,  be covers of $(X,A)$. Let 
$$
 F_j:=\bigcup\limits_{i=1}^n{S_j^i}, \quad \mathcal F:=\{F_1,\ldots,F_n\},  \quad  \mathcal C:=\mathcal F|_A. 
$$
Suppose $\mathcal C$ is not null--homotopic.  Then there exists a permutation $\pi$ of \, $1,2,\ldots,n$ \, such that
$$
\bigcap\limits_{i=1}^n{S^i_{\pi(i)}}\ne\emptyset. 
$$
\end{theorem}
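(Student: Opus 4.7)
My plan is to extend Gale's original argument by substituting Theorem \ref{thKKM1} for the classical KKM theorem, and then upgrading the resulting common point to a rainbow point via a bipartite matching argument.

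For each $i\in I_n$ I fix a partition of unity $\Phi^i=\{\phi^i_1,\ldots,\phi^i_n\}$ on $X$ subordinate to $\mathcal{S}^i$ and form the averaged partition of unity $\psi_j:=\tfrac{1}{n}\sum_i\phi^i_j$. Since $\mathrm{supp}(\phi^i_j)\subset S^i_j$, we have $\mathrm{supp}(\psi_j)\subset F_j$, so $\{\psi_j\}$ is subordinate to $\mathcal{F}$ and determines a continuous map $f_\mathcal{F}:X\to\Delta^{n-1}$ whose restriction to $A$ represents the nontrivial class $\mu(\mathcal{C})\in[A,{\Bbb S}^{n-2}]$. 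Theorem \ref{thKKM1}, applied to the pair $(X,A)\in\ep_{n-2}$ and the cover $\mathcal{F}$, then produces a common point of all the $F_j$'s.

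The next step is the combinatorial heart of the proof. For each $x\in X$ let $G_x$ be the bipartite graph on $I_n\sqcup I_n$ whose edges are pairs $(i,j)$ with $x\in S^i_j$; a permutation $\pi$ certifying $x\in\bigcap_i S^i_{\pi(i)}$ is the same as a perfect matching of $G_x$, which by Hall's marriage theorem exists iff for every set $J$ of players the set $N_x(J)$ of colors they collectively cover satisfies $|N_x(J)|\ge|J|$. It therefore suffices to find some $x\in X$ at which Hall's condition holds. I argue by contradiction: suppose Hall's condition fails at every $x$. Then the continuous function $p(x):=\mathrm{perm}[\phi^i_j(x)]$ vanishes identically on $X$, and the Frobenius--K\"onig theorem provides, at each $x$, a rectangular zero block $P(x)\times C(x)\subset I_n\times I_n$ with $|P(x)|+|C(x)|>n$.

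Using this data I would construct a continuous extension $\widetilde f:X\to\partial\Delta^{n-1}\cong{\Bbb S}^{n-2}$ of the boundary map $f_\mathcal{C}$; since $\mu(\mathcal{C})\ne0$ and $(X,A)\in\ep_{n-2}$, no such extension exists, producing the desired contradiction. The main obstacle is precisely this construction, because the zero-block data are discrete and jump with $x$. My strategy is to cover $X$ by the finitely many closed sets $Z_{P,C}:=\{x:\phi^i_j(x)=0\ \forall(i,j)\in P\times C\}$ indexed by admissible pairs $(P,C)$, and on each $Z_{P,C}$ to project $f_\mathcal{F}$ onto the proper face $\Delta_{I_n\setminus C}\subset\partial\Delta^{n-1}$ by renormalising the coefficients $\psi_j$ for $j\notin C$; convexly combining these local face-projections via a partition of unity subordinate to $\{Z_{P,C}\}$ should yield a global continuous $\widetilde f$ agreeing with $f_\mathcal{C}$ on $A$, completing the argument.
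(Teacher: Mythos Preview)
Your first two paragraphs set things up exactly as the paper does: form the averaged partition of unity $\psi_j=\frac{1}{n}\sum_i\phi^i_j$ and obtain the map $f_{\mathcal F}:X\to\Delta^{n-1}$ whose restriction to $A$ represents the nontrivial class $\mu(\mathcal C)$. But at this point you take a detour. Applying Theorem~\ref{thKKM1} yields only a point where every $\psi_j$ is positive; that is too weak for a matching, and the Hall/Frobenius--K\"onig/face--projection programme you then launch has a genuine gap. A convex combination (via a partition of unity) of points lying on \emph{different} proper faces of $\partial\Delta^{n-1}$ will in general fall into the interior of $\Delta^{n-1}$, so your $\widetilde f$ need not take values in ${\Bbb S}^{n-2}$ at all. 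There is also no reason $\widetilde f$ should literally agree with $f_{\mathcal C}$ on $A$, and the partition of unity you invoke must be subordinate to an \emph{open} cover, whereas the $Z_{P,C}$ are closed; thickening them destroys the zero--block property that made the face projection well defined.

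The paper's proof avoids all of this with one observation you nearly made. Instead of asking merely that $f_{\mathcal F}(p)$ lie in the interior of the simplex, ask that it equal the \emph{barycentre} $(1/n,\ldots,1/n)$. Such a $p$ exists, since otherwise radial projection from the barycentre would extend $f_{\mathcal C}$ to a continuous map $X\to{\Bbb S}^{n-2}$, contradicting $(X,A)\in\ep_{n-2}$ together with $\mu(\mathcal C)\ne0$. At that $p$ the $n\times n$ matrix $\bigl(\phi^i_j(p)\bigr)$ has all row sums equal to $1$ (each $\Phi^i$ is a partition of unity) \emph{and} all column sums equal to $1$ (choice of the barycentre): it is doubly stochastic. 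Mirsky's lemma (equivalently Birkhoff's theorem) then immediately furnishes a permutation $\pi$ with $\phi^i_{\pi(i)}(p)>0$ for every $i$, hence $p\in\bigcap_i S^i_{\pi(i)}$. This single step replaces your entire third paragraph.
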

\begin{proof} Here we use Gale's proof of his lemma. 
We consider the case where the sets $S^i_j$ are open. As above the proof of the closed case then follows by a routine limiting argument. Now for each cover $\mathcal S^i$ consider the corresponding partition of unity $\{\varphi_1^i,\ldots,\varphi_n^i\}$. 

  Define $\Phi^i:X\to \Delta^{n-1}$  and $\Phi:X\to \Delta^{n-1}$ by 
$$
\Phi^i(p):=(\varphi^i_1(p),\ldots,\varphi^i_n(p)),  \quad  \Phi(p):=\frac{\Phi^1(p)+\ldots+\Phi^n(p)}{n},
$$
where $\Delta^{n-1}=\{(x_1,\ldots,x_n) \in {\Bbb R}^n \, |\, x_i\ge0,\,  i=1,\ldots,n, \, x_1+\ldots+x_n=1\}$. 
Since  $\mathcal C$ is not null--homotopic,  $\Phi$ is a map from $A$ to $\partial\Delta^{n-1}\equiv{\Bbb S}^{n-2}$ and $\Phi(X)=\Delta^{n-1}$. Therefore, there is $p\in X$ such that $\Phi(p)=(1/n,\ldots,1/n)$, so $n\Phi(p)=(1,\ldots,1)$. Thus, the matrix $M:=n\Phi(p)=\left(\varphi^i_j(p)\right)$ is  a {\it doubly stochastic} matrix, is a square matrix  of nonnegative real numbers, each of whose rows and columns sums to 1. By Mirsky's lemma \cite{Mirsky} for any doubly stochastic matrix $M$ it is a possible to find a permutation $\pi$ such that $\varphi^i_{\pi(i)}(p)>0$ for all $i$, but this means precisely that $p\in S^i_{\pi(i)}$.  
\end{proof}

\noindent{\bf Remark.}. This proof is constructive. If $p\in \Phi^{-1}(1/n,\ldots, 1/n)$,  then there is a permutation $\pi$ with $\varphi^i_{\pi(i)}(p)>0$, i.e. the intersection of all the $S^i_{\pi(i)}$, $i=1,\ldots,n$, is not empty. 

\medskip

Theorem \ref{th4} and \cite[Theorem 2.3]{MusH} imply the following corollaries. 

\begin{cor} Let  $\mathcal S^i=\{S^i_1,\ldots,S^i_n\}$, $i=1,\ldots,n$,  be covers of  ${\Bbb S}^k$. Let  $F_j$ is the union of $S^i_j$, $i=1,\ldots,n$, and 
$\mathcal F:=\{F_1,\ldots,F_n\}. $
Suppose $\mathcal F$ is not null--homotopic on the boundary  of ${\Bbb S}^k$.  Then there exists a permutation $\pi$ of \, $1,2,\ldots,n$ \, such that
$$
\bigcap\limits_{i=1}^n{S^i_{\pi(i)}}\ne\emptyset. 
$$
\end{cor}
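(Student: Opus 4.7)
My plan is to reduce the statement to Theorem~\ref{th4} applied to the pair $(X,A)=({\Bbb B}^{k+1},{\Bbb S}^k)$, after lifting the given covers of ${\Bbb S}^k$ to covers of ${\Bbb B}^{k+1}$.

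I first verify that $({\Bbb B}^{k+1},{\Bbb S}^k)\in\ep_{n-2}$. The hypothesis that $\mathcal F$ is not null-homotopic yields $\mu(\mathcal F)\ne 0$ in $[{\Bbb S}^k,{\Bbb S}^{n-2}]=\pi_k({\Bbb S}^{n-2})$, so in particular this homotopy group is non-trivial. By item (i) recorded right after the definition of $\ep_n$ in Section~2 (namely, the consequence of \cite[Theorem 2.3]{MusH} that $\pi_k({\Bbb S}^n)\ne 0$ implies $({\Bbb B}^{k+1},{\Bbb S}^k)\in\ep_n$), the pair $({\Bbb B}^{k+1},{\Bbb S}^k)$ therefore belongs to $\ep_{n-2}$.

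Next I extend each cover $\mathcal S^i$ of ${\Bbb S}^k$ to a cover $\widetilde{\mathcal S}^i$ of ${\Bbb B}^{k+1}$ by radial extension: set $\widetilde S^i_j:=\{tx:x\in S^i_j,\ t\in(0,1]\}$ for every $j$, and adjoin a small open ball around the origin $0\in{\Bbb B}^{k+1}$ to exactly one of the sets, say $\widetilde S^i_1$, for each $i$, so that $\widetilde{\mathcal S}^i$ covers all of ${\Bbb B}^{k+1}$. By construction $\widetilde{\mathcal S}^i|_{{\Bbb S}^k}=\mathcal S^i$, so the corresponding union family satisfies $\widetilde{\mathcal F}|_{{\Bbb S}^k}=\mathcal F$ and is still not null-homotopic. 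Theorem~\ref{th4} then furnishes a permutation $\pi$ and a point $p\in{\Bbb B}^{k+1}$ with $p\in\bigcap_{i=1}^n\widetilde S^i_{\pi(i)}$.

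Finally, I pull $p$ back to ${\Bbb S}^k$. The case $p=0$ is excluded: only the sets indexed by $j=1$ contain $0$, so $p=0$ would force $\pi(i)=1$ for every $i$, which is not a permutation for $n\ge 2$. Writing $p=tx$ with $t\in(0,1]$ and $x\in{\Bbb S}^k$, the radial structure of the extensions gives $x\in S^i_{\pi(i)}$ for every $i$, which is the desired conclusion. The main subtlety I expect is the bookkeeping in the extension step: the appended neighborhood of $0$ must be placed inside a single index (say $j=1$) so that the permutation constraint forces $p\ne 0$, and the radial extension on the remaining sets has to be organized to preserve both the equality $\widetilde{\mathcal S}^i|_{{\Bbb S}^k}=\mathcal S^i$ and the non-null-homotopy of $\widetilde{\mathcal F}|_{{\Bbb S}^k}$.
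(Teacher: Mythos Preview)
The paper derives this corollary in one line from Theorem~\ref{th4} together with \cite[Theorem~2.3]{MusH}. The intended reading is that the $\mathcal S^i$ are covers of a ball (compare the parallel Corollaries~\ref{KKM1} and~\ref{th1}, and the Introduction, which speaks of painting a $k$--simplex); the phrase ``boundary of ${\Bbb S}^k$'' in the printed statement is a slip, since $\partial{\Bbb S}^k=\emptyset$. Under that reading one sets $(X,A)=({\Bbb B}^{k+1},{\Bbb S}^k)$, observes---exactly as you do in your first step---that the non-null-homotopy hypothesis forces $\pi_k({\Bbb S}^{n-2})\ne0$ and hence $({\Bbb B}^{k+1},{\Bbb S}^k)\in\ep_{n-2}$, and then applies Theorem~\ref{th4} directly, with no extension or pullback needed.

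Your literal reading (covers given only on ${\Bbb S}^k$, then extended radially to ${\Bbb B}^{k+1}$) is a different route, and the final pullback has a genuine gap. You correctly rule out $p=0$, but not $0<|p|<\epsilon$: for the unique index $i_0$ with $\pi(i_0)=1$, membership $p\in\widetilde S^{i_0}_1$ may come solely from the appended ball $B(0,\epsilon)$, which contains points $tx$ for \emph{every} $x\in{\Bbb S}^k$, so $x=p/|p|$ need not lie in $S^{i_0}_1$. Hence the conclusion $x\in\bigcap_i S^i_{\pi(i)}$ is not established. The cleanest repair is to adopt the disk interpretation from the outset; then your first paragraph already is the whole proof.
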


\begin{cor} \label{corM} Let  $\mathcal S^i=\{S^i_1,\ldots,S^i_n\}$, $i=1,\ldots,n$,  be covers of a manifold $M$ of dimension $(n-1)$ with boundary. Let  $F_j$ is the union of $S^i_j$, $i=1,\ldots,n$,
$\mathcal F:=\{F_1,\ldots,F_n\} $, and $ \mathcal C:=\mathcal F|_{\partial M}. $
Suppose $\deg{\mathcal C}\ne 0$.   Then there exists a permutation $\pi$ of \, $1,2,\ldots,n$ \, such that
$$
\bigcap\limits_{i=1}^n{S^i_{\pi(i)}}\ne\emptyset. 
$$
\end{cor}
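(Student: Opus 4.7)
The plan is to deduce this corollary directly from Theorem~\ref{th4} applied to the pair $(X,A) := (M, \partial M)$. Two hypotheses of that theorem need to be verified: first, that $(M,\partial M)\in\ep_{n-2}$; second, that the cover $\mathcal C = \mathcal F|_{\partial M}$ is not null--homotopic on the boundary. Once both are in hand, Theorem~\ref{th4} delivers a permutation $\pi$ with $\bigcap_{i=1}^n S^i_{\pi(i)} \ne \emptyset$, which is exactly the conclusion sought.

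The first condition is essentially already on record. Observation (ii) stated just before Theorem~\ref{thKKM1} says that for any oriented manifold $Y$ of dimension $k+1$ with $A = \partial Y$, the pair $(Y,A)$ lies in $\ep_k$. Specializing this to $Y = M$ of dimension $n-1$ (implicitly taken to be orientable, as is needed in any case for $\deg\mathcal C$ to be an integer invariant) and $k = n-2$ yields $(M,\partial M)\in\ep_{n-2}$. For the second condition, note that $\partial M$ is a closed oriented manifold of dimension $n-2$, so Theorem~\ref{th21}(2) identifies ``not null--homotopic'' with ``$\deg(f_{\mathcal C})\ne 0$''. The hypothesis of the corollary is precisely the latter, and it implicitly presupposes that $\deg\mathcal C$ is well defined---that is, that $\bigcap_j C_j = \emptyset$---since otherwise $f_{\mathcal C}$ does not land in $\partial\Delta^{n-1}\equiv{\Bbb S}^{n-2}$.

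There is no real obstacle beyond this small bookkeeping: all the actual content of the argument---building the partition-of-unity map $\Phi$, averaging the $\Phi^i$, pulling back the barycenter of $\Delta^{n-1}$ to locate a point $p$ at which the matrix $\bigl(\varphi_j^i(p)\bigr)$ is doubly stochastic, and extracting $\pi$ from Mirsky's lemma---has already been executed inside Theorem~\ref{th4}. The only reason this does not appear as an instantaneous corollary in the text is the notational translation between $\deg \mathcal C$ and the abstract invariant $\mu(\mathcal C)$, which is supplied by Theorem~\ref{th21}(2).
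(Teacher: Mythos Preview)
Your proposal is correct and matches the paper's own argument: the paper simply states that Theorem~\ref{th4} together with \cite[Theorem 2.3]{MusH} implies this corollary, which is exactly the route you take (observation~(ii) before Theorem~\ref{thKKM1} and Theorem~\ref{th21}(2) are both consequences of that cited result). Your remark that orientability of $M$ is implicitly assumed is well placed, since it is needed for $\deg\mathcal C$ to make sense and is indeed left tacit in the statement.
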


Now consider the rental harmony problem. Following Su \cite{Su}, suppose there are $n$ housemates, and $n$ rooms to assign, numbered $1,\ldots,n$.  Let $x_i$ denote the price of the $i$--th room, and suppose that the total rent is 1. Then $x_1+\ldots+x_n=1$ and $x_i\ge 0$. 
 From this we see that the set of all pricing schemes  forms a  simplex $\Delta^{n-1}$. 
 
Denote by $S^i_j$ a set of price vectors $p$ in $\Delta^{n-1}$ such that housemate $i$ likes room $j$ at these prices.  Consider the following conditions: 

\medskip 

(C1) In any partition of the rent, each person finds some room acceptable. In other words, $\mathcal S^i=\{S^i_1,\ldots,S^i_n\}$, $i=1,\ldots,n$, is a (closed or open) cover of  $\Delta^{n-1}$. 

\medskip

(C2) Each person always prefers a free room (one that costs no rent) to a non--free room. In other words, for all $i$ and $j$, $S^i_j$ contains 
$\Delta^{n-1}_j:=\{(x_1,\ldots,x_n)\in \Delta^{n-1}\,  |\, x_j=0\}$. 

\medskip 

In fact, (C2) is the ``dual'' boundary KKM condition.  Su  \cite[Sect. 7]{Su} using the ``dual'' simplex $\Delta^*$ and  ``dual'' Sperner lemma proves that  there exists a permutation $\pi$ of \, $1,2,\ldots,n$, such that the intersection of the $S^i_{\pi(i)}$, $i=1,\ldots,n$, is not empty. It proves the rental harmony theorem. Also,  this theorem can be derived from Corollary \ref{corM}. 

\medskip 

\noindent{\bf Rental Harmony Theorem} \cite{Su}. {\it Suppose $n$ housemates in an $n$--bedroom house seek to decide who gets which room and for what part of the total rent. Also, suppose that the  conditions (C1) and (C2) hold. Then there exists a partition of the rent so that each person prefers a different room.}

\medskip 

Let us consider an extension of this theorem. Suppose there are some constraints $f_i(p)\le0, \, i=1,\ldots,n,$ for price vectors. Let  $M:=\{p\in \Delta^{n-1}\, |\, f_1(p)\le 0,\ldots, f_k(p)\le0\}$ be a manifold  of dimension $n-1$. Let $S^i_j$ be sets of price vectors $p$ in $M$ such that housemate $i$ likes room $j$ at these prices. Consider the following conditions:

\medskip 

(A1) $\mathcal S^i=\{S^i_1,\ldots,S^i_n\}$, $i=1,\ldots,n$, is a cover of  $M$. 

\medskip

(A2)  $\deg{\mathcal C}\ne 0$, where  $ \mathcal C:=\mathcal F|_{\partial M}$,  $\mathcal F:=\{F_1,\ldots,F_n\}$, and $F_j:=\bigcup_{i=1}^n{S^i_j}$. 

\medskip 

The following theorem is equivalent to Corollary \ref{corM}.

  \begin{theorem} Suppose $n$ housemates in an $n$--bedroom house seek to decide who gets which room and for what part of the total rent. Also, suppose that the  conditions (A1) and (A2) hold. Then there exists a partition of the rent so that each person prefers a different room.
  \end{theorem}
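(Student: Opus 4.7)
The plan is that this theorem is essentially a restatement of Corollary \ref{corM} in the economic language of rent-pricing, so the proof will amount to matching notation and invoking that corollary. First I would unpack the ambient space: the set $M = \{p \in \Delta^{n-1} \,|\, f_1(p) \le 0, \ldots, f_k(p) \le 0\}$ is posited to be a compact PL manifold of dimension $n-1$ with boundary, so $\partial M$ is well defined and carries a degree-valued invariant $\deg \mathcal{C}$ for any cover $\mathcal{C}$ of $\partial M$ by $n$ sets, as discussed in Section~2. This is the only preliminary point that genuinely requires attention and is taken as a hypothesis on the constraints $f_i$.

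Next I would observe that condition (A1) is, verbatim, the statement that each $\mathcal{S}^i=\{S^i_1,\ldots,S^i_n\}$ is a cover of the $(n-1)$-dimensional manifold $M$. Setting $F_j := \bigcup_{i=1}^n S^i_j$, $\mathcal{F}:=\{F_1,\ldots,F_n\}$, and $\mathcal{C} := \mathcal{F}|_{\partial M}$, condition (A2) asserts $\deg \mathcal{C} \ne 0$. Together, (A1) and (A2) are exactly the hypotheses of Corollary \ref{corM}.

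Applying Corollary \ref{corM} therefore produces a permutation $\pi$ of $\{1,\ldots,n\}$ and a point $p^* \in \bigcap_{i=1}^n S^i_{\pi(i)}$. By the defining property of $S^i_j$, namely that $S^i_j$ is the set of price vectors in $M$ at which housemate $i$ likes room $j$, the point $p^*$ is a rent partition at which housemate $i$ likes room $\pi(i)$ for every $i$. Since $\pi$ is a permutation, distinct housemates prefer distinct rooms, which is precisely the envy-free assignment promised by the theorem.

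The proof thus contains no new topological content beyond Corollary \ref{corM} (which itself rests on Theorem \ref{th4} and the Mirsky-Birkhoff argument). The only potential obstacle is purely bookkeeping: verifying that the translation between the rental-harmony vocabulary and the cover-theoretic vocabulary is faithful, in particular that $M$ really is an oriented manifold with boundary so that $\deg \mathcal{C}$ in (A2) makes sense. Once that is granted, the theorem follows by direct citation.
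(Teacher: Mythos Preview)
Your proposal is correct and matches the paper exactly: the paper does not give a separate proof but simply declares the theorem ``equivalent to Corollary~\ref{corM},'' which is precisely the reduction you carry out by translating (A1)--(A2) into the hypotheses of that corollary and reading off the permutation $\pi$ and common point $p^*$.
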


\section{KKMS type theorems with boundary conditions}


Let us  extend definitions from Section 2 for any set of points (vectors)   $V:=\{v_1,\ldots, v_m\}$   in ${\Bbb R}^{n}$.
Denote by $c_V$ the center of mass of $V$,  $c_V:=(v_1+\ldots+v_m)/m. $

Let $\mathcal U=\{U_1,\ldots,U_m\}$ be  an open cover of a space $T$ and  $\Phi=\{\varphi_1,\ldots,\varphi_m\}$ be a partition of unity subordinate to $\mathcal U$.  
Let 
$$
\rho_{\mathcal U,\Phi,V}(x):=\sum\limits_{i=1}^m{\varphi_i(x)v_i}. 
$$

Suppose  $c_V$ lies outside of the image $\rho_{\mathcal U,\Phi,V}(T)$ in ${\Bbb R}^{n}$. Let for all $x\in T$
$$
f_{\mathcal U,\Phi,V}(x):=\frac{\rho_{\mathcal U,\Phi,V}(x)-c_V}{||\rho_{\mathcal U,\Phi,V}(x)-c_V||}. 
$$
Then $f_{\mathcal U,\Phi,V}$ is a continuous map from $T$ to ${\Bbb S}^{n-1}$. 

In \cite[Lemmas 2.1 and 2.2]{MusH} we proved that the homotopy class $[f_{\mathcal U,\Phi,V}]\in [T,{\Bbb S}^{n-1}]$ does not depend on $\Phi$ and  then  the  homotopy class $[f_{\mathcal U,V}]$ in $[T,{\Bbb S}^{n-1}]$ is well define.

\medskip

\noindent {\bf Notation 4.1:} Denote the homotopy class $[f_{\mathcal U,V}]$ in $[T,{\Bbb S}^{n-1}]$ by $\mu(\mathcal U,V)$. 

\medskip

Note that for the case $V=\vrt(\Delta^{n})$ we have
$
\mu(\mathcal U,V)=\mu(\mathcal U).
$

\medskip

 In  \cite[Lemma 2.4]{MusH} we show that this invariant is well defined also for closed covers. As above 
  we call a family of sets $\mathcal S=\{S_1,\ldots,S_m\}$  as a cover of a space $T$ if $\mathcal S$ is either an open or closed cover of $T$.


\begin{df} Let $I$ be a set of labels of cardinality $m$. Let  $V:=\{v_i,\, i\in I\}$,  be a set of points in ${\Bbb R}^{n}$.
	Then a nonempty subset $\mathcal{B}\subset I$ is said to be balanced with respect to $V$  if for all $i\in\mathcal{B}$ there exist non-negative $\lambda_i$ such that
$$
\sum\limits_{i\in\mathcal{B}}{\lambda_iv_i}=c_V, \; \mbox{ where } \;  \sum\limits_{i\in\mathcal{B}}{\lambda_i}=1 \, \mbox{ and } \, c_V:=\frac{1}{m}\sum\limits_{i\in I}{v_i}. 
$$
In other words, $c_V\in\conv\{v_i, \, i\in\mathcal{B}\}$, where $\conv(Y)$ denote the convex hull of $Y$ in ${\Bbb R}^{n}$.  
\end{df}


First we consider an extension of Theorems \ref{thKKM1}.

\begin{theorem} \label{th3}  
Let $V:=\{v_1,\ldots, v_m\}$  be a set of points in ${\Bbb R}^{n}$.
  Let $A$ be a subspace of a space $X$. Let  $(X,A)\in\ep_{n-1}$.  Let  $\mathcal F=\{F_1,\ldots,F_m\}$ be a cover of $(X,A)$. 
Suppose  $\mathcal S:=\mathcal F|_A$ is not null--homotopic.  Then   there is a balanced subset $\mathcal{B}$ in $I_m$ with respect to $V$ such that 
$$
\bigcap\limits_{i\in\mathcal{B}} {F_i}\ne\emptyset.
$$ 
\end{theorem}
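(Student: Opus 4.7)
The plan is to argue by contradiction, following the same template that proves Theorem \ref{thKKM1}, but using the auxiliary map $\rho_{\mathcal F,\Phi,V}$ into $\mathbb{R}^n$ and its normalization toward $\mathbb{S}^{n-1}$ instead of the map into $\partial\Delta^{n-1}$. As in Gale's argument, I will first treat the open cover case and then pass to closed covers by a routine limiting argument.

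Suppose, for contradiction, that for every balanced-with-respect-to-$V$ subset $\mathcal B\subset I_m$ one has $\bigcap_{i\in\mathcal B}F_i=\emptyset$. Choose a partition of unity $\Phi=\{\varphi_1,\ldots,\varphi_m\}$ subordinate to $\mathcal F$ and form
$$
\rho(x):=\rho_{\mathcal F,\Phi,V}(x)=\sum_{i=1}^m\varphi_i(x)v_i.
$$
The key observation is that $c_V\notin\rho(X)$. Indeed, if $\rho(x_0)=c_V$ for some $x_0\in X$, set $\mathcal B(x_0):=\{i:\varphi_i(x_0)>0\}$. Then the coefficients $\lambda_i:=\varphi_i(x_0)$ satisfy $\lambda_i\ge 0$, $\sum_{i\in\mathcal B(x_0)}\lambda_i=1$, and $\sum_{i\in\mathcal B(x_0)}\lambda_i v_i=c_V$, so $\mathcal B(x_0)$ is balanced with respect to $V$. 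Since $\mathrm{supp}(\varphi_i)\subset F_i$, we have $x_0\in\bigcap_{i\in\mathcal B(x_0)}F_i$, contradicting our assumption.

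Hence $c_V$ is missed by $\rho$ on all of $X$, so the normalized map
$$
f_{\mathcal F,\Phi,V}(x)=\frac{\rho(x)-c_V}{\|\rho(x)-c_V\|}
$$
is a well-defined continuous map $X\to\mathbb{S}^{n-1}$. Its restriction to $A$ is the map whose homotopy class is $\mu(\mathcal S,V)\in[A,\mathbb{S}^{n-1}]$, which is nonzero by the hypothesis that $\mathcal S$ is not null-homotopic. Thus we have produced a continuous extension to $X$ of a map $A\to\mathbb{S}^{n-1}$ representing a nontrivial class in $[A,\mathbb{S}^{n-1}]$. Invoking the hypothesis $(X,A)\in\ep_{n-1}$, as already used in the proof of Theorem \ref{thKKM1}, this yields a contradiction, and the theorem follows.

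The main obstacle is step three, i.e. the careful bookkeeping that turns a point where $\rho$ hits $c_V$ into a balanced subset: one has to read off both the balancing coefficients and the membership information $x_0\in F_i$ from the same partition-of-unity values $\varphi_i(x_0)$. Once this bridge between the geometric condition $\rho(x_0)=c_V$ and the combinatorial notion of a balanced subset is set up, everything else is parallel to the $V=\vrt(\Delta^{n})$ case handled in Section 2, and the closed-cover version follows by the same limiting argument used in the proof of Theorem \ref{th4}.
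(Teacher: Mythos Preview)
Your argument is correct and follows essentially the same route as the paper's own proof: assume no balanced $\mathcal B$ has $\bigcap_{i\in\mathcal B}F_i\ne\emptyset$, deduce $c_V\notin\rho_{\mathcal F,V}(X)$, normalize to obtain a map $X\to\mathbb S^{n-1}$ extending the nontrivial $f_{\mathcal S,V}$ on $A$, and contradict $(X,A)\in\ep_{n-1}$. The only difference is that you spell out explicitly the step linking $\rho(x_0)=c_V$ to a balanced set (via $\mathcal B(x_0)=\{i:\varphi_i(x_0)>0\}$), which the paper compresses into one sentence; your handling of closed covers by a limiting argument is consistent with the paper's conventions (cf.\ \cite[Lemma 2.4]{MusH}) though the paper simply relies on $\mu$ being defined for closed covers as well.
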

\begin{proof} 
Assume the converse. Then there are no balanced subsets $\mathcal{B}$ in $I_m$ such that $\{F_i,\,i\in\mathcal{B}\}$ have a common point.
It implies that $c_V\notin\rho_{\mathcal F,V}(X)$ and therefore, $f_{\mathcal F,V}:X\to {\Bbb S}^{n-1}$ is well defined. On the other side, it is an extension of the map $f_{\mathcal S,V}:A\to {\Bbb S}^{n-1}$ with $[f_{\mathcal S,V}]\ne0$, a contradiction.  	
\end{proof}

\noindent{\bf Remark.} The assumption: ``$c_V\in\rho_{\mathcal S,V}(A)$ in ${\Bbb R}^{n}$'' is equivalent to the assumption: ``there is a balanced $\mathcal{B}$ in $I_m$  with respect to $V$ such that the intersection of all $S_i, \, {i\in\mathcal{B}},$ is not empty.'' Thus, if $c_V\in\rho_{\mathcal S,V}(A)$ or $c_V\notin\rho_{\mathcal S,V}(A)$ and $\mu(\mathcal S,V)\ne0$, then the intersection of the $F_i, \,  {i\in\mathcal{B}}$, is not empty. 

\medskip



Theorem \ref{th3} and \cite[Theorem 2.3]{MusH} imply the following extension of Corollary \ref{KKM1}.

\begin{cor} \label{th1} Let  $V:=\{v_1,\ldots, v_m\}$  be a set of points in ${\Bbb R}^{n}$. Let  $\mathcal F=\{F_1,\ldots,F_m\}$ be a cover of  ${\Bbb B}^{k}$  that  is not null--homotopic on the boundary. Then there is a balanced with respect to $V$  subset $\mathcal{B}$ in $I_m:=\{1,\ldots,m\}$ such that  the intersection of all $F_i, \, {i\in\mathcal{B}},$ is not empty.
\end{cor}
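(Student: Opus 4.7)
The plan is to derive this corollary as an immediate specialization of Theorem \ref{th3} to the pair $X = {\Bbb B}^k$, $A = \partial {\Bbb B}^k = {\Bbb S}^{k-1}$. The set $V$, the cover $\mathcal{F}$, and the non-null-homotopy assumption on the boundary all transfer verbatim, so the only nontrivial ingredient to check is that $({\Bbb B}^k, {\Bbb S}^{k-1}) \in \ep_{n-1}$.

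First I would unpack the hypothesis. By the definition given before Theorem \ref{thKKM1}, the statement ``$\mathcal{F}|_{{\Bbb S}^{k-1}}$ is not null-homotopic with respect to $V$'' means precisely that the intersection of the restricted sets is empty and that the invariant $\mu(\mathcal{F}|_{{\Bbb S}^{k-1}}, V)$ is nonzero in $[{\Bbb S}^{k-1}, {\Bbb S}^{n-1}] = \pi_{k-1}({\Bbb S}^{n-1})$. In particular this forces $\pi_{k-1}({\Bbb S}^{n-1}) \neq 0$; otherwise the hypothesis would be vacuous and there would be nothing to prove. Next, invoking statement (i) listed right after the definition of $\ep_n$ in the excerpt (which is the content of \cite[Theorem 2.3]{MusH}), the non-triviality of $\pi_{k-1}({\Bbb S}^{n-1})$ immediately yields $({\Bbb B}^k, {\Bbb S}^{k-1}) \in \ep_{n-1}$. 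At this point every hypothesis of Theorem \ref{th3} is in place with $X = {\Bbb B}^k$ and $A = {\Bbb S}^{k-1}$, and the existence of a balanced subset $\mathcal{B} \subset I_m$ with $\bigcap_{i \in \mathcal{B}} F_i \neq \emptyset$ follows directly from its conclusion.

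I do not anticipate any serious obstacle, since the corollary is essentially a concrete instance of Theorem \ref{th3}. The only mild subtlety worth flagging explicitly in the write-up is that the non-null-homotopy assumption on $\mathcal{F}|_{{\Bbb S}^{k-1}}$ automatically supplies the homotopy-theoretic non-triviality needed to guarantee $({\Bbb B}^k, {\Bbb S}^{k-1}) \in \ep_{n-1}$, so no separate dimensional restriction relating $k$ and $n$ has to be imposed by hand.
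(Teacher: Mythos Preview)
Your proposal is correct and matches the paper's own justification, which simply states that ``Theorem~\ref{th3} and \cite[Theorem 2.3]{MusH} imply'' the corollary. You have merely made explicit the one point the paper leaves implicit: that the non-null-homotopy hypothesis on $\mathcal F|_{{\Bbb S}^{k-1}}$ already forces $\pi_{k-1}({\Bbb S}^{n-1})\ne 0$, which via \cite[Theorem 2.3]{MusH} (statement (i) after the definition of $\ep_n$) gives $({\Bbb B}^k,{\Bbb S}^{k-1})\in\ep_{n-1}$.
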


If $V=\vrt(\Delta^n)$, then this corollary implies the KKM theorem. It is also implies the KKMS theorem.

\begin{cor} (KKMS theorem \cite{Sh}) Let $\mathcal K$ be the collection of all non-empty subsets of  $I_{k+1}$. Consider a simplex $S$  in ${\Bbb R}^{k}$ with vertices $x_1,\ldots,x_{k+1}$.  Let $V:=\{v_\sigma, \, \sigma\in \mathcal K\}\subset {\Bbb R}^{k}$, where  $v_\sigma$ denotes the center of mass of $S_\sigma:=\{x_i, \, i\in\sigma\}$.
	
	Let $\mathcal C:= \{C_\sigma, \, \sigma\in\mathcal K\}$ be a cover of $|\Delta^{k}|$ such that for every $J\subset I_{k+1}$ the simplex $\Delta_J$ that is spanned by vertices from $J$ is covered by $\{C_\sigma, \, \sigma\in J\}$. Then there exists a balanced collection $\mathcal{B}$ in $\mathcal K$ with respect to $V$ such that 
$$
\bigcap\limits_{\sigma\in\mathcal{B}} {C_\sigma}\ne\emptyset.
$$
\end{cor}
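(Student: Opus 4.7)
\medskip
\noindent\textbf{Proof plan.} The plan is to reduce to Corollary~\ref{th1}, applied to the cover $\mathcal C$ of $|\Delta^k|\cong{\Bbb B}^k$ with the set $V\subset{\Bbb R}^k$. If $\bigcap_{\sigma\in\mathcal K}C_\sigma\ne\emptyset$ then $\mathcal B:=\mathcal K$ itself works, because $c_V=\frac{1}{|\mathcal K|}\sum_{\sigma\in\mathcal K}v_\sigma$ exhibits $\mathcal K$ as balanced with $\lambda_\sigma=1/|\mathcal K|$. So I would assume $\bigcap_{\sigma\in\mathcal K}C_\sigma=\emptyset$, and the task becomes to show that $\mathcal C|_{\partial\Delta^k}$ is not null-homotopic. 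A short symmetry calculation gives $c_V=\tfrac{1}{k+1}\sum_{i=1}^{k+1}x_i$, so $c_V$ is the barycentre of $S$ and lies off every proper face $\Delta_J$.

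The crux is to choose a partition of unity for $\mathcal C$ that is \emph{face-adapted}, in the sense that $\varphi_\sigma(p)=0$ whenever $p\in\Delta_J$ with $\sigma\not\subseteq J$. I would build such a $\Phi=\{\varphi_\sigma\}_{\sigma\in\mathcal K}$ from a triangulation $T$ of $S$ in which every face $\Delta_J$ is a subcomplex and every open star $\st(u)$ sits inside some $C_\sigma$ (existence by a Lebesgue-number argument, with an open thickening and a limiting step if the $C_\sigma$ are closed). For each $u\in\vrt(T)$ let $J_u$ be the smallest face of $S$ containing $u$; applying the KKMS hypothesis to $J_u$ produces a $\sigma(u)\subseteq J_u$ with $\st(u)\subseteq C_{\sigma(u)}$. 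Setting $\varphi_\sigma(p):=\sum_{u:\,\sigma(u)=\sigma}\lambda_u(p)$, where $\lambda_u$ is the barycentric coordinate of $u$ in $T$, yields a partition of unity subordinate to $\mathcal C$; by \cite[Lemmas 2.1 and 2.2]{MusH} the class $\mu(\mathcal C,V)$ does not depend on $\Phi$, so this choice is admissible.

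With such $\Phi$, for any $p\in\Delta_J$ the carrier of $p$ in $T$ lies in $\Delta_J$; each vertex $u$ of that carrier satisfies $\sigma(u)\subseteq J_u\subseteq J$, so $v_{\sigma(u)}\in\Delta_J$, and convexity forces $\rho_{\mathcal C,\Phi,V}(p)\in\Delta_J$. The straight-line homotopy $H_t(p):=(1-t)\,\rho_{\mathcal C,\Phi,V}(p)+t\,p$ in ${\Bbb R}^k$ therefore stays inside $\Delta_J$ whenever $p\in\Delta_J$, so it avoids $c_V$ for every $t\in[0,1]$. Normalising by the distance to $c_V$ yields a homotopy from $f_{\mathcal C,V}|_{\partial\Delta^k}$ to the radial projection $p\mapsto(p-c_V)/\|p-c_V\|$, which is a homeomorphism $\partial\Delta^k\to{\Bbb S}^{k-1}$ of degree $\pm1$. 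Hence $\mu(\mathcal C|_{\partial\Delta^k},V)\ne0$, so $\mathcal C|_{\partial\Delta^k}$ is not null-homotopic, and Corollary~\ref{th1} completes the proof.

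The main hurdle is the construction of the face-adapted partition of unity; this is the only step where the KKMS covering condition enters in an essential way. Everything else is convexity of the faces together with the standard degree-$\pm 1$ computation for radial projection from an interior point.
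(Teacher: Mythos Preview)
Your approach is exactly the paper's: its entire proof is the one-line assertion ``the assumptions of the corollary imply that $\mu(\mathcal C,V)=\deg(f_{\mathcal C,V})=1$; thus Corollary~\ref{th1} yields the corollary,'' and you have supplied the computation behind that assertion via the face-respecting partition of unity and the straight-line homotopy to the radial projection.

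One step needs tightening. The plain Lebesgue number only gives $\st(u)\subseteq C_\tau$ for \emph{some} $\tau$, while the KKMS hypothesis at $J_u$ only gives $u\in C_\sigma$ for some $\sigma\subseteq J_u$; these two $\sigma$'s need not coincide, so the clause ``applying the KKMS hypothesis to $J_u$ produces a $\sigma(u)\subseteq J_u$ with $\st(u)\subseteq C_{\sigma(u)}$'' does not follow as written. The fix is to run the compactness argument face by face: for each face $\Delta_J$ the continuous function $p\mapsto\max_{\sigma\subseteq J}d\bigl(p,\,|\Delta^k|\setminus C_\sigma\bigr)$ is strictly positive on $\Delta_J$ by the KKMS condition, hence bounded below by some $\delta_J>0$; refining $T$ so that its mesh is below $\min_J\delta_J$ then yields, for every vertex $u$, a label $\sigma(u)\subseteq J_u$ with $\overline{\st(u)}\subseteq C_{\sigma(u)}$. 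With this adjustment your argument goes through and gives $\deg(f_{\mathcal C,V}|_{\partial\Delta^k})=\pm1$, as the paper asserts.
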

\begin{proof} 
The assumptions of the corollary imply that $\mu(\mathcal C,V)=\deg(f_{\mathcal C,V})=1$.    Thus,  Corollary \ref{th1} yields the corollary.
\end{proof}


There are many extensions of the Sperner and KKM lemmas, in particular, that are Tucker's and Ky Fan's lemmas \cite{Bacon,DeLPS,Mus,MusSpT,MusS,MusH}. Actually, these lemmas can be derived from Theorem \ref{th1} using certain sets $V$. Let us consider as an example a generalization of Tucker's lemma. 

\begin{cor}  Let $V:=\{\pm e_1,\ldots,\pm e_n\}$, where $e_1,\ldots,e_n$ is a basis in ${\Bbb R}^n$. 	
 Let   $\mathcal F=\{F_1,F_{-1}\ldots,F_n,F_{-n}\}$ be a  cover of  ${\Bbb B}^{k}$  that  is not null--homotopic on the boundary. Then there is $i$ such that the intersection of $F_i$ and $F_{-i}$ is not empty.

In particular, if $\mathcal F$ is antipodally symmetric on the boundary of ${\Bbb B}^{k}$, i. e. for all $i$ we have $S_{-i}=-S_i,$ where $S_j:=F_j|_{{\Bbb S}^{k-1}}$, then there is $i$ such that  $F_i\cap F_{-i}\ne\emptyset$. 	
\end{cor}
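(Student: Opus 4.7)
The plan is to apply Corollary 4.1 with the given $V=\{\pm e_1,\ldots,\pm e_n\}\subset{\Bbb R}^n$ and translate the conclusion back using a characterization of balanced subsets. Since the elements of $V$ come in antipodal pairs, the center of mass is $c_V=0$, so a subset $\mathcal B\subseteq I:=\{\pm 1,\ldots,\pm n\}$ is balanced iff $0\in\conv\{v_j:j\in\mathcal B\}$, where $v_j:=\sgn(j)\,e_{|j|}$ to match the indexing of $\mathcal F$.

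The key observation is that every balanced $\mathcal B$ contains some antipodal pair $\{i,-i\}$. Indeed, writing $\sum_{j\in\mathcal B}\lambda_j v_j=0$ with $\lambda_j\ge 0$ and $\sum\lambda_j=1$, and reading off the $k$--th coordinate gives $\lambda_k=\lambda_{-k}$ for every $k=1,\ldots,n$ (with $\lambda_j:=0$ if $j\notin\mathcal B$). Since not all $\lambda_j$ vanish, some $k$ satisfies $\lambda_k=\lambda_{-k}>0$, so $\{k,-k\}\subseteq\mathcal B$. Corollary 4.1 then produces a balanced $\mathcal B$ with $\bigcap_{j\in\mathcal B} F_j\ne\emptyset$, and the antipodal pair $\{i,-i\}\subseteq\mathcal B$ it contains yields $F_i\cap F_{-i}\ne\emptyset$, which is the main assertion.

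For the ``in particular'' clause I would show that antipodal symmetry on the boundary already forces $\mathcal F$ not to be null--homotopic on $\partial{\Bbb B}^k$. Starting from any partition of unity $\{\varphi_j\}_{j\in I}$ subordinate to $\mathcal S:=\mathcal F|_{{\Bbb S}^{k-1}}$, symmetrize it by $\tilde\varphi_j(x):=\tfrac12\bigl(\varphi_j(x)+\varphi_{-j}(-x)\bigr)$; the hypothesis $S_{-j}=-S_j$ guarantees $\mathrm{supp}(\tilde\varphi_j)\subseteq S_j$, and by construction $\tilde\varphi_{-j}(-x)=\tilde\varphi_j(x)$, whence $\rho_{\mathcal S,\tilde\Phi,V}(-x)=-\rho_{\mathcal S,\tilde\Phi,V}(x)$. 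Either $c_V=0$ already lies in $\rho_{\mathcal S,\tilde\Phi,V}({\Bbb S}^{k-1})$, in which case the remark after Theorem 4.1 delivers a balanced common intersection on the boundary itself, or $f_{\mathcal S,V}:{\Bbb S}^{k-1}\to{\Bbb S}^{n-1}$ is a well--defined odd map, and in the natural Tucker range $k=n$ the Borsuk--Ulam theorem forces its degree to be odd, so $\mu(\mathcal S,V)\ne 0$. The main obstacle is this last nontriviality step: the symmetrization must preserve supports (which is exactly where $S_{-j}=-S_j$ enters), and the odd--degree conclusion requires the dimensional match $k=n$ implicit in the classical Tucker setting.
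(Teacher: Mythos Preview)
Your approach is essentially the paper's. The first part is identical in spirit: the paper simply asserts that every balanced subset ``consists of pairs $(i,-i)$'' and invokes Corollary~4.1, while you prove the (sufficient and more accurate) statement that every balanced subset \emph{contains} an antipodal pair.

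For the ``in particular'' clause the paper also passes to an odd map $\rho_{\mathcal S,V}$ and splits into the two cases you describe. Two remarks. First, your explicit symmetrization $\tilde\varphi_j(x)=\tfrac12(\varphi_j(x)+\varphi_{-j}(-x))$ is a genuine improvement in rigor: the paper just asserts that $\rho_{\mathcal S,V}$ is odd, but for an arbitrary partition of unity this need not hold, and your averaging is exactly what is needed (with $S_{-j}=-S_j$ ensuring the supports are preserved). Second, you flag the restriction to $k=n$ as an obstacle, but the paper resolves the case $k>n$ by the same tool you already have in hand: if $k>n$, the Borsuk--Ulam theorem forces the odd map $\rho_{\mathcal S,V}:\Bbb S^{k-1}\to\Bbb R^{n}$ to hit $0$, which lands you in the first branch of your dichotomy (a balanced intersection already on the boundary). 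With that observation your case analysis covers all $k\ge n$; for $k<n$ neither argument gives anything (and indeed the ``in particular'' assertion can fail there), which is consistent with the classical Tucker range.
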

\begin{proof} Note that any balanced subset with respect to $V$ consists of pairs $(i,-i)$, $i=1,\ldots,n$. It yields the first part of the theorem. 
	
By assumption, $\mathcal S$ is antipodally symmetric on ${\Bbb S}^{k-1}$. Then $\rho_{\mathcal S,V}:{\Bbb S}^{k-1}\to {\Bbb R}^{n}$ is an odd (antipodal) map. If $k>n$, then the Borsuk--Ulam theorem implies there $x\in{\Bbb S}^{k-1}$ such that $\rho_{\mathcal S,V}(x)=0$. Therefore, there is $i$ such that  $S_i\cap S_{-i}\ne\emptyset$. 

If for all $i$ we have  $S_i\cap S_{-i}=\emptyset$, then $k\le n$. Then the odd mapping theorem (see \cite{MusS}) implies that $k=n$ and $\deg(f_{\mathcal S,V})$ is odd. Thus, $\mu({\mathcal S,V})\ne0$ and from Theorem \ref{th1} follows the second part of the corollary. 
\end{proof}

\begin{df} Let  $V:=\{v_1,\ldots, v_m\}$  be a set of points in ${\Bbb R}^{n}$. Let $T$ be a triangulation of an $n$-dimensional manifold $M$. Let $L:\vrt(T)\to I_m$ be a labeling. 
We say that a simplex $s\in T$ is BL  {(Balanced Labelled)} if the set of labels $L(s):=\{L(v), v\in\vrt(s)\}$ is balanced with respect to $V$
\end{df}

\cite[Theorems 3.3 and 3.5]{MusH} yield the following theorem. 

\begin{theorem} \label{th2} Let  $V:=\{v_1,\ldots, v_m\}$  be a set of points in ${\Bbb R}^{n}$. 
Let  $T$ be a triangulation of an oriented manifold $M$  of dimension $n$ with boundary. Let $L:\vrt(T)\to\{1,2,\ldots,m\}$ be a labeling such that $T$ has no BL--simplices on the boundary. Then $T$ must contain at least $|\deg(L,\partial T)|$ distinct (internal)  BL--simplices. 
\end{theorem}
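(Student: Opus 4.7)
The plan is to imitate the proof of Theorem \ref{th34}, replacing the vertices of the standard simplex by the general point set $V$, and using elementary PL--degree theory.

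First, I would construct the piecewise--linear ``coloring map'' $f_{L,V}:|T|\to{\Bbb R}^n$ defined on vertices by $u\mapsto v_{L(u)}$ and extended affinely on each simplex. On an $n$--simplex $s$ of $T$ its image is $\conv\{v_{L(u)} : u\in\vrt(s)\}$, so $s$ is a BL--simplex precisely when $c_V\in f_{L,V}(s)$. The hypothesis that no boundary simplex is BL gives $c_V\notin f_{L,V}(\partial T)$, so the normalized map $\hat{f}:\partial T\to{\Bbb S}^{n-1}$, $\hat f(x):=(f_{L,V}(x)-c_V)/\|f_{L,V}(x)-c_V\|$, is well defined and continuous, and by construction represents the class $\mu(L,\partial T,V)$ whose degree is $\deg(L,\partial T)$.

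Next, since $f_{L,V}(\partial T)$ is compact and avoids $c_V$, and since the union of images of all proper faces and of all degenerate $n$--simplices is a closed nowhere dense set, I can choose a regular value $c'\in{\Bbb R}^n$ arbitrarily close to $c_V$ with $c'\notin f_{L,V}(\partial T)$ and $c'$ not in the image of any $(n-1)$--face of the triangulation. The straight--line homotopy between $c_V$ and $c'$ lies in ${\Bbb R}^n\setminus f_{L,V}(\partial T)$, so the corresponding normalized boundary maps are homotopic and share the degree $\deg(L,\partial T)$. By the standard PL--degree formula for maps between oriented $n$--manifolds with boundary,
$$
\deg(L,\partial T)=\sum_{p\in f_{L,V}^{-1}(c')}\sgn\bigl(\det df_{L,V}(p)\bigr),
$$
where the preimages lie in the interior of $|T|$. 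Each $n$--simplex contributes at most one preimage (since $f_{L,V}|_s$ is affine), and each contribution is $\pm 1$; degenerate $n$--simplices contribute $0$.

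Hence the number of distinct $n$--simplices $s$ with $c'\in\mathrm{int}\,f_{L,V}(s)$ is at least $|\deg(L,\partial T)|$. For any such $s$, closedness of $f_{L,V}(s)$ and the fact that $c'$ can be taken arbitrarily close to $c_V$ gives $c_V\in f_{L,V}(s)$, so $s$ is BL; and since the boundary has no BL--simplex, each such $s$ is internal. The main obstacle is purely bookkeeping: one must carry out the perturbation step so that the degree is preserved and every preimage lands inside a non--degenerate BL $n$--simplex, which is the same type of transversality argument underlying Theorem \ref{th34}.
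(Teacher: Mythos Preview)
Your argument is correct in outline and is the natural direct PL--degree proof. The paper itself does not give an independent argument here: it simply cites \cite[Theorems 3.3 and 3.5]{MusH} and declares that they yield Theorem~\ref{th2}. So you are supplying what the paper omits, and your route---build the piecewise--linear map $f_{L,V}$, observe that BL--simplices are exactly those whose image contains $c_V$, perturb $c_V$ to a regular value, and count signed preimages---is precisely the machinery underlying the cited results and the earlier Theorem~\ref{th34}.

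One point deserves a cleaner statement. In your final paragraph you write that ``$c'$ can be taken arbitrarily close to $c_V$'' and conclude $c_V\in f_{L,V}(s)$ for each contributing simplex $s$. As written this is ambiguous, because the collection of simplices whose image contains $c'$ may change as $c'$ varies. The rigorous version is a finiteness/compactness argument: choose a sequence of regular values $c'_k\to c_V$; for each $k$ there are at least $|\deg(L,\partial T)|$ simplices whose image contains $c'_k$; since $T$ has only finitely many $n$--simplices, some fixed set of at least $|\deg(L,\partial T)|$ simplices occurs for infinitely many $k$, and then closedness of each $f_{L,V}(s)$ gives $c_V\in f_{L,V}(s)$. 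With that adjustment your proof is complete and essentially coincides with the intended approach.
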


\begin{cor} \label{corS}
Let $T$ be a triangulation of a compact oriented PL--manifold $M$ of dimension $(m-1)$ with boundary. Then for any labeling $L:\vrt(T)\to \{1,2,\ldots,m\}$ the triangulation $T$ must  contain at least $|\deg(L,\partial T)|$ fully colored $(m-1)$--simplices. 	
\end{cor}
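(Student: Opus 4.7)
The plan is to derive Corollary \ref{corS} from Theorem \ref{th2} by a judicious choice of the point set $V$. Specifically, I would apply Theorem \ref{th2} with $n = m-1$ and $V = \{v_1, \ldots, v_m\}$ chosen to be the vertex set of an $(m-1)$-simplex affinely embedded in $\mathbb{R}^{m-1}$ (equivalently, any $m$ affinely independent points in $\mathbb{R}^{m-1}$).

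The first step is to verify that with this choice, a simplex $s \in T$ is BL if and only if $s$ is fully colored. The centroid $c_V$ lies in the relative interior of $\conv(V)$, and for every proper subset $\mathcal{B} \subsetneq I_m$ the convex hull $\conv\{v_i : i \in \mathcal{B}\}$ is a proper face of $\conv(V)$, which does not contain $c_V$. Hence the only balanced subset of $I_m$ with respect to $V$ is $I_m$ itself. Consequently $L(s)$ is balanced iff $L(s) = I_m$, and this forces $\dim s = m-1$; i.e.\ the BL simplices are exactly the fully colored $(m-1)$-simplices.

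The second step is to observe that the boundary hypothesis of Theorem \ref{th2} holds automatically. Since $\partial M$ has dimension $m-2$, every simplex of $\partial T$ has at most $m-1$ vertices, hence at most $m-1$ distinct labels, and so cannot be fully colored. Therefore $T$ has no BL-simplices on $\partial T$, and Theorem \ref{th2} applies to give at least $|\deg(L,\partial T)|$ internal BL simplices, i.e.\ fully colored $(m-1)$-simplices.

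The main (in fact the only) delicate point is reconciling the two symbols $\deg(L,\partial T)$ that appear: the one attached by Theorem \ref{th2} to the pair $(L,V)$ through the map $f_{\mathcal{U},V}\colon\partial M\to\mathbb{S}^{n-1}$, and the one from Section 2 that appears in the statement of Corollary \ref{corS}. This is precisely the content of the remark following Notation 4.1 that $\mu(\mathcal{U},V)=\mu(\mathcal{U})$ whenever $V$ is the vertex set of the standard simplex, applied to the cover $\mathcal{U}_L$ restricted to $\partial T$; once this identification is observed, no further computation is needed and the conclusion of Corollary \ref{corS} follows immediately.
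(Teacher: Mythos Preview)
Your proposal is correct and follows essentially the same approach as the paper: choose $V$ to be the vertex set of an $(m-1)$--simplex in $\mathbb{R}^{m-1}$, observe that the only balanced subset of $I_m$ is $I_m$ itself, and invoke Theorem \ref{th2}. You supply more detail than the paper does---in particular your second step (no BL simplices on $\partial T$ for dimensional reasons) and your third step (identifying the two meanings of $\deg(L,\partial T)$ via the remark after Notation 4.1) are left implicit in the paper's one-line proof---but the underlying argument is the same.
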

	\begin{proof}  Let   $V:=\{v_1,\ldots,v_m\}$  be the set of vertices of an $(m-1)$--simplex in ${\Bbb R}^{m-1}$. Then $I_m$ is the only balanced subset in $I_m$ with respect to $V$.
 Thus,  Theorem \ref{th2} yields the corollary. 
	\end{proof}

\begin{cor} \label{corT} Let $T$ be a triangulation of  ${\mathbb B}^n$ that antipodally symmetric on the boundary. Let $L:\vrt(T)\to \{+1,-1,\ldots, +n,-n\}$ be a labelling  that is antipodal on the boundary.
	 Suppose there are no complementary edges on the boundary. Then $\deg(L,\partial T)$ is an odd integer and there are at least $|\deg(L,\partial T)|$ internal  complementary edges.
	 
(An edge in $T$ is called {complementary} if its two vertices are labelled by opposite numbers.)
\end{cor}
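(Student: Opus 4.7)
The plan is to apply Theorem~\ref{th2} with $V := \{+e_1, -e_1, \ldots, +e_n, -e_n\} \subset {\Bbb R}^n$, whose center of mass is $c_V = 0$. Since $V$ is antipodally symmetric about the origin, a subset of $\{+1,-1,\ldots,+n,-n\}$ is balanced with respect to $V$ if and only if it contains a pair $\{+i,-i\}$; consequently a simplex $s \in T$ is BL exactly when two of its vertices carry opposite labels, and the minimal BL-simplices are the complementary edges themselves. The hypothesis ``no complementary edges on $\partial T$'' is therefore equivalent to ``no BL-simplices on $\partial T$'', which is the boundary hypothesis needed in Theorem~\ref{th2}.

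For the oddness of $\deg(L,\partial T)$, I exploit antipodality. Let $\mathcal U_L(T) = \{U_{+1},U_{-1},\ldots,U_{+n},U_{-n}\}$ be the open star cover associated with $L$. Since $L$ is antipodal on $\partial T$ and the triangulation is antipodally symmetric there, $U_{-i}\cap\partial T = -(U_{+i}\cap\partial T)$, so any partition of unity $\{\varphi_i\}$ subordinate to $\mathcal U_L(T)|_{\partial T}$ may be replaced by its symmetrization $\tilde\varphi_i(x) := \frac{1}{2}\bigl(\varphi_i(x) + \varphi_{-i}(-x)\bigr)$, still subordinate and now satisfying $\tilde\varphi_{-i}(-x) = \tilde\varphi_i(x)$. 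This forces $\rho_{L,V}(-x) = -\rho_{L,V}(x)$ on $\partial T$, so $f_{L,V}\colon \partial T \to {\Bbb S}^{n-1}$ is an antipodal self-map of an antipodally triangulated $(n-1)$-sphere; Borsuk's classical odd mapping theorem then gives that $\deg(L,\partial T) = \deg f_{L,V}$ is odd. In particular $\mu(\mathcal U_L(T)|_{\partial T},V)\ne 0$, and by the independence of this homotopy class on the partition of unity (\cite[Lemmas 2.1 and 2.2]{MusH}), the hypotheses of Theorem~\ref{th2} are met.

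Theorem~\ref{th2} then yields at least $|\deg(L,\partial T)|$ distinct internal BL-simplices. The main remaining step, which I expect to be the principal technical obstacle, is to convert this into the same lower bound for distinct internal complementary edges. I would do this by unpacking the degree-theoretic proof of Theorem~\ref{th2} from \cite[Theorems 3.3 and 3.5]{MusH}: for $V = \{\pm e_i\}$ the affine image $\conv\{v_i : i \in L(s)\}$ of a simplex $s$ can touch $0$ only through a complementary subface, so the preimages of any regular value close to $c_V = 0$ under the piecewise linear extension $\rho_{L,V}\colon T \to {\Bbb R}^n$ must concentrate in neighbourhoods of complementary edges. A standard transversal perturbation then produces a finite signed preimage set, of cardinality at least $|\deg(L,\partial T)|$, whose points can be matched with distinct internal complementary edges, yielding the required count.
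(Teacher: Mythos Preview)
Your approach coincides with the paper's: set $V=\{\pm e_1,\ldots,\pm e_n\}$, characterize the balanced label sets as exactly those containing a pair $\{+i,-i\}$, obtain oddness of $\deg(L,\partial T)$ from the odd mapping theorem (the paper simply cites \cite{Mus} for this, without your explicit symmetrization of the partition of unity), and then invoke Theorem~\ref{th2}. The paper's entire proof is three sentences.

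The only divergence is your final paragraph. The paper performs \emph{no} separate passage from BL-simplices to complementary edges; it applies Theorem~\ref{th2} and stops, treating the counted BL-simplices as complementary edges on the grounds that the minimal balanced subsets for this $V$ are precisely the pairs $\{+i,-i\}$, with the actual counting deferred to \cite[Theorems~3.3 and~3.5]{MusH}. Your transversality-perturbation argument is therefore extra caution that the paper does not carry out; whether that step is genuinely needed depends on the precise form of the cited results in \cite{MusH}, not on anything in the present paper.
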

\begin{proof} Let $V:=\{\pm e_1,\ldots,\pm e_n\}$, where $e_1,\ldots,e_n$ is an orthonormal basis in ${\Bbb R}^n$. 	Then any balanced subset with respect to $V$ consists of pairs $(i,-i)$, $i=1,\ldots,n$.  The fact that $\deg(L,\partial T)$ is odd follows from the odd mapping theorem, see \cite{Mus}. Thus, Theorem \ref{th2} completes the proof. 
\end{proof}

\medskip

\begin{figure}
\begin{center}

  \includegraphics[clip,scale=0.9]{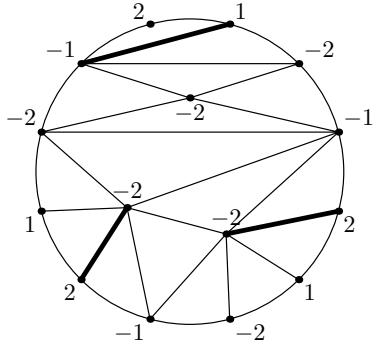}
\end{center}
\caption{Since $\deg(L,\partial T)=3$,  there are  three  complementary edges.}
\end{figure}

\medskip

  In Fig.2 is shown a labeling with  $\deg(L,\partial T)=3$. Then Corollary \ref{corT} yield that there are at least three complimentary edges.



\medskip


 \medskip

\noindent O. R. Musin\\ 
 University of Texas Rio Grande Valley, One West University Boulevard, Brownsville, TX, 78520 \\
{\it E-mail address:} oleg.musin@utrgv.edu


\begin{thebibliography}{99}
	
\bibitem{AZ} Y. Azrieli and E. Shmaya, Rental harmony with roommates, {\it Journal of Mathematical Economics,} {\bf 46} (2014), 303--310. 
	
\bibitem{Bacon}
P. Bacon, Equivalent formulations of the Borsuk-Ulam theorem, {\it Canad. J. Math.,} {\bf 18} (1966), 492--502.

\bibitem{Bapat}
R. B. Bapat, A constructive proof of a permutation--based generalization of Sperner's lemma, {\it Mathematical Programming,}  {\bf 44} (1989), 113--120.






\bibitem{DeLPS}
J. A. De Loera, E. Peterson, and F. E. Su, A Polytopal Generalization of Sperner's Lemma,  {\it J. of Combin. Theory Ser. A,} {\bf 100} (2002), 1--26. 

\bibitem{DS} L. E. Dubins and E. H. Spanier, How to cut a cake fairly, {\it Amer. Math. Monthly,}
{\bf 68} (1961), 1--17.

\bibitem{Gale}
D. Gale, Equilibrium in a discrete exchange economy with money, {\it International Journal of Game
Theory,} 13 (1984), 61--64.

\bibitem{Her}
 P. J. J. Herings, An extremely simple proof of the K--K--M--S theorem, {\it Economic Theory}, {\bf 10} (1997), 361--367.

\bibitem{Kom}
H. Komiya,  A simple proof of the K--K--M--S theorem, {\it Economic Theory}, {\bf 4} (1994), 463--466.

\bibitem{KKM}
B. Knaster, C. Kuratowski, S. Mazurkiewicz, Ein Beweis des Fixpunktsatzes f\"ur $n$--dimensionale Simplexe, {\it Fundamenta Mathematicae}  {\bf 14} (1929): 132--137.

\bibitem{KyFan}
K. Fan, A generalization of Tucker's combinatorial lemma with topological applications. {\it Ann. of Math.,} {\bf 56} (1952), 431--437.




\bibitem{Mirsky} 
L. Mirsky,  Proofs of two  theorems on doubly--stochastic  matrices, {\it  Proc. Amer. Math. Soc.,} {\bf 9}  (1958), 371--374. 

\bibitem{Mus} 
O. R. Musin, Borsuk-Ulam type theorems for manifolds,  {\it Proc. Amer. Math. Soc.}  {\bf 140} (2012), 2551--2560. 

\bibitem{MusSpT} 
O. R. Musin, Extensions of Sperner and Tucker's lemma for manifolds, {\it J. of Combin. Theory Ser. A,} {\bf 132} (2015), 172--187.  

\bibitem{MusS}
O. R. Musin, Generalizations of Tucker--Fan--Shashkin lemmas,  {\it Arnold Math. J.}, {\bf 2:3} (2016),  299--308. 

\bibitem{MusH} 
O. R. Musin, Homotopy invariants of covers and KKM type lemmas, {\it Algebr. Geom. Topol.,} {\bf 16} (2016), 1799--1812. 

\bibitem{Scarf}
H. Scarf. The core of an n-person game, {\it Econometria,} 35 (1967), 50--69. 

\bibitem{Sh}
L. S. Shapley  On balanced games without side payments, in
{\it Mathematical Programming,} Hu, T.C. and S.M. Robinson (eds), Academic
Press, New York, 261--290, 1973.

\bibitem{SV}
L. S. Shapley and R. Vohra, On Kakutani's fixed point theorem, the KKMS theorem and the core of a balanced game, {\it Economic Theory},  {\bf 1} (1991), 108--116.

\bibitem{Stn}
H. Steinhaus, Sur las division pragmatique, {\it Econometrica} {\bf 17} (1949), 315--319.

\bibitem{Strom}
W. Stromquist, How to cut a cake fairly, {\it Amer. Math. Monthly,} {\bf 87} (1980), 640--644.

\bibitem{Su}
F. E. Su, Rental harmony: Sperner's lemma in fair division, {\it Amer. Math. Monthly}, {\bf 106} (1999), 930--942. 


\bibitem{Sperner}
E. Sperner, Neuer Beweis f\" ur die Invarianz der Dimensionszahl und des Gebietes, Abh. Math. Sem. Univ. Hamburg {\bf 6} (1928), 265--272.






 \end{thebibliography}
\end{document}